

\documentclass[preprint,12pt]{elsarticle}




\usepackage{amssymb}
\usepackage{amsthm}

\newtheorem{theorem}{Theorem}

\usepackage{lipsum}
\usepackage{bm}
\usepackage{amsmath}
\usepackage{amssymb}
\usepackage{comment}
\usepackage{amsfonts}
\usepackage{graphicx}
\usepackage{epstopdf}
\usepackage{algorithmic}
\usepackage[T1]{fontenc}
\usepackage{textcomp}
\usepackage{listings}
\usepackage{hyperref}
\usepackage{subcaption}
\lstset{upquote=true,
frame=single}

\usepackage{tikz}
\usepackage[utf8]{inputenc}
\usepackage{pgfplots}
\usepackage{pbox}

\definecolor{efie}{rgb}{1,0.64,0.4}
\definecolor{calefie}{rgb}{1,0,0}
\definecolor{badmfie}{rgb}{0,1,1}
\definecolor{mfie}{rgb}{0,0,1}
\definecolor{cfie}{rgb}{0,0,0}
\definecolor{reference}{rgb}{0,.5,0}


\newcommand{\efieplot}[1][3]{\addplot [efie, mark=triangle*, mark size=#1, mark options={solid,draw=black}]}
\newcommand{\calefieplot}[1][2]{\addplot [calefie, mark=*, mark size=#1, mark options={solid,draw=black}]}
\newcommand{\mfieplot}[1][2]{\addplot [mfie, mark=square*, mark size=#1, mark options={solid,draw=black}]}

\newcommand{\cfieplot}[1][3]{\addplot [cfie, mark=diamond*, mark size=#1, mark options={solid,draw=black}]}

\newcommand{\efiedesc}{orange triangles}
\newcommand{\calefiedesc}{red circles}
\newcommand{\mfiedesc}{blue squares}

\newcommand{\cfiedesc}{black diamonds}

\pgfplotsset{every tick label/.append style={font=\footnotesize}}
\pgfplotsset{label style={font=\footnotesize}}

\ifpdf
  \DeclareGraphicsExtensions{.eps,.pdf,.png,.jpg}
\else
  \DeclareGraphicsExtensions{.eps}
\fi

\graphicspath{{img/}}

\usepackage{amsopn}

\usepackage{cleveref}

\newcommand{\B}[1]{{\bm#1}}
\newcommand{\bE}{\B{e}}

\newcommand{\bnu}{\B{\nu}}
\newcommand{\inc}{^\textup{inc}}
\newcommand{\scat}{^\textup{scat}}
\newcommand{\tot}{^\textup{tot}}

\newcommand{\ii}{\mathrm{i}}
\newcommand{\scurl}[1][]{\textup{curl}\ifthenelse{\equal{#1}{}}{}{_#1}}
\newcommand{\vcurl}[1][]{\textup{\textbf{curl}}\ifthenelse{\equal{#1}{}}{}{_#1}}
\newcommand{\sdiv}[1][]{\textup{div}\ifthenelse{\equal{#1}{}}{}{_#1}}
\newcommand{\vdiv}[1][]{\textup{\textbf{div}}\ifthenelse{\equal{#1}{}}{}{_#1}}
\newcommand{\popE}{\mathcal{E}}
\newcommand{\popH}{\mathcal{H}}
\newcommand{\bopE}{\mathsf{E}}
\newcommand{\bopH}{\mathsf{H}}
\newcommand{\bopI}{\mathsf{Id}}
\newcommand{\bopA}{\mathsf{A}}
\newcommand{\bopR}{\mathsf{R}}
\newcommand{\interior}{^\textup{--}}
\newcommand{\exterior}{^\textup{+}}
\newcommand{\gt}{\B{\gamma}_\textup{t}}
\newcommand{\gN}{\B{\gamma}_\textup{N}}
\newcommand{\gn}{\B{\gamma}_{\bnu}}

\newcommand{\gtint}{\gt\interior}

\newcommand{\sdivspaceGamma}{\B{H}_\times^{-1/2}(\sdiv[\Gamma],\Gamma)}
\newcommand{\scurlspaceGamma}{\B{H}_\times^{-1/2}(\scurl[\Gamma],\Gamma)}
\newcommand{\average}[1]{\left\{#1\right\}_\Gamma}
\newcommand{\jump}[1]{\left[#1\right]_\Gamma}


\journal{CAMWA}

\begin{document}

\begin{frontmatter}

\title{Software frameworks for integral equations in electromagnetic scattering based on Calder\'on identities}

\author[ucl]{Matthew Scroggs}
\ead{matthew.scroggs.14@ucl.ac.uk}
\author[ucl]{Timo Betcke}
\ead{t.betcke@ucl.ac.uk}
\author[ucl]{Erik Burman}
\ead{e.burman@ucl.ac.uk}
\author[simpleware]{Wojciech \'{S}migaj}
\ead{w.smigaj@simpleware.com}
\author[pontificia]{Elwin van 't Wout}
\ead{e.wout@ing.puc.cl}

\address[ucl]{University College London, London, UK}
\address[simpleware]{Simpleware Ltd, Exeter, UK}
\address[pontificia]{Pontificia Universidad Cat\'olica de Chile, Santiago, Chile}

\begin{abstract}
In recent years there have been tremendous advances in the theoretical understanding of boundary integral equations for Maxwell problems. 
In particular, stable dual pairings of discretisation spaces have been developed that allow robust formulations of the preconditioned 
electric field, magnetic field and combined field integral equations. Within the BEM++ boundary element library we have developed 
implementations of these frameworks that allow an intuitive formulation of the typical Maxwell boundary integral formulations within a few 
lines of code. The basis for these developments is an efficient and robust implementation of Calder\'on identities together with a product 
algebra that hides and automates most technicalities involved in assembling Galerkin boundary integral equations. In this paper we 
demonstrate this framework and use it to derive very simple and robust software formulations of the standard preconditioned electric field, 
magnetic field and regularised combined field integral equations for Maxwell.
\end{abstract}

\begin{keyword}
boundary element method \sep electromagnetic scattering \sep electric field \sep magnetic field \sep combined field
\end{keyword}



\end{frontmatter}


\section{Introduction}
\label{sec:intro}
The numerical simulation of electromagnetic wave scattering poses significant theoretical and computational challenges. Much effort in 
recent years has gone into the development of fast and robust boundary integral equation formulations to simulate a range of phenomena from 
the design and performance of antennas to radar scattering from large metallic objects.

While there have been a range of important theoretical advances in recent years for the development of robust preconditioned boundary 
integral formulations for Maxwell, the computational implementation remains a challenge. At University College London, as part of the BEM++ 
project \cite{bempp} we have developed a number of easy to use Python-based open-source tools to explore and solve Maxwell problems based on 
preconditioned electric field (EFIE), magnetic field (MFIE) and combined field (CFIE) integral equation formulations. In this paper, we give an overview of these 
developments, present simple example codes and discuss the underlying implementation. The goal is to make advanced integral equation solvers for Maxwell available to non-specialised users without requiring significant knowledge in the design and implementation of these methods.

In particular, we 
consider the following formulation of electromagnetic scattering from a perfectly conducting object. Let $\Omega\interior\subset\mathbb{R}^3$ be 
a bounded domain with boundary $\Gamma$ and denote by $\Omega\exterior = \mathbb{R}^3\backslash\overline{\Omega\interior}$ its complement. Let $\bnu$ be 
the exterior normal vector on $\Gamma$ pointing into $\Omega\exterior$ as shown in Figure \ref{fig:exteriorproblem}.

\begin{figure}
  \centering
  \includegraphics[scale=.8]{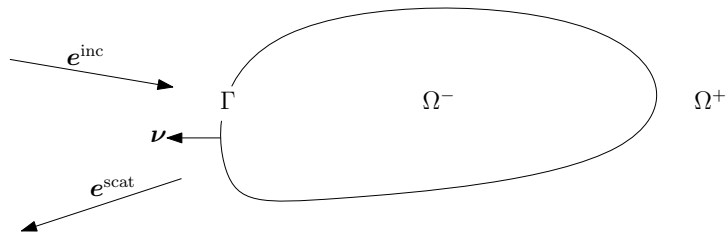}
  \caption{Time-harmonic scattering from a perfect conductor.}
  \label{fig:exteriorproblem}
\end{figure}

Denote by $\bE\inc$ an incident field. We are looking for the solution $\bE\tot=\bE\inc + \bE\scat$ of the exterior scattering problem, 
satisfying
\begin{subequations}
 \begin{align}
  \vcurl\,\vcurl\,\bE\tot-k^2\bE\tot &= 0&&\text{in }\Omega\exterior, \label{eq:maxwell_pde} \\
 \bE\tot\times \bnu&=0&&\text{on }\Gamma,\label{eq:maxwell_bnd}\\
\lim_{|\B{x}|\rightarrow\infty} |\B{x}|\left(\vcurl\,\bE\scat\times\frac{\B{x}}{|\B{x}|}-\ii k\bE\scat\right) &= 0&&\text{as }|\B{x}|\to\infty.\label{eq:maxwell_silver_mueller}
\end{align}
 \label{eq:maxwell}
\end{subequations}

\noindent Here, $k=\omega\sqrt{\epsilon_0\mu_0}$ denotes the wavenumber of the problem, with $\omega$ denoting the frequency and $\epsilon_0$ and $\mu_0$
the electric permeability and magnetic permittivity in vacuum. Frequently, the incident field is a plane wave given by $\bE\inc=\B{p}e^{\ii k 
\B{x}\cdot\B{d}}$, where $\B{p}$ is a non-zero vector representing the polarisation of the wave and $\B{d}$ is a unit vector perpendicular 
to $\B{p}$ that gives the direction of the plane wave.

In Section \ref{sec:bempp}, we give a short overview of the main concepts of the BEM++ boundary element library with an emphasis on its operator and grid function algebra.

When discretising these equations, the finite dimensional function spaces must be chosen carefully in order to give well-conditioned systems
of equations. In Section \ref{sec:spaces}, we present an overview of the function spaces for Maxwell boundary integral operators, and the various discretisations of these spaces. The emphasis is on the presentation of stable dual pairings between so-called Rao--Wilton--Glisson (RWG) \cite{RWG} spaces, and Buffa--Christiansen (BC) spaces \cite{BCSpacesDef}.

In Section \ref{sec:operators}, we present the standard electric and magnetic field boundary integral operators and the resulting Calder\'on projector. The concept of the Calder\'on projector is fundamental to this paper. We describe the function spaces used in assembling the Calder\'on projector and numerically demonstrate its properties using small BEM++ code snippets. 

In Sections \ref{sec:efie}, \ref{sec:mfie} and \ref{sec:cfie}, we then demonstrate the Calder\'on preconditioned EFIE, MFIE and regularised CFIE and their BEM++ implementations. Interesting non-trivial domains will be used to demonstrate and compare their properties.

The use of stable RWG/BC pairings, on which this paper is based, is well known in the electromagnetics community (see e.g. \cite{BCPrecond, AndriulliMFIE,JNM:JNM844}). The emphasis in this paper is on a simple high-level software representation that is mathematically correct and robust, but hides the underlying complexities to provide a simple framework in which to solve electromagnetic scattering problems.


\section{A Maxwell introduction to BEM++}
\label{sec:bempp}

BEM++ (\url{www.bempp.org}) is an open-source library for the Galerkin discretisation and solution of boundary integral equations in electrostatics, acoustics and computational electromagnetics.
The library supports dense discretisation and hierarchical matrix assembly. The interface of the library is written in Python with fast kernel assembly routines implemented in C++.

In this section we give a brief overview of BEM++ using the example of Maxwell boundary integral operators. We skip mathematical details: these will be discussed in the subsequent sections. A particular focus in this section is on the operator concept of BEM++, which will allow us to elegantly describe the various types of integral equation in what follows.

BEM++ has a range of features to define or import triangular surface grids consisting of flat elements. To define a function space over a grid we use the \verb@function_space@ command. For example, a simple space consisting of RWG functions (see Section \ref{sec:spaces} for details) is defined by
\begin{lstlisting}[language=Python]
import bempp.api
grid = ...
rwg_space = bempp.api.function_space(grid, 'RWG', 0)
\end{lstlisting}
The third parameter in the \verb@function_space@ command
is the degree of the space. For Maxwell only lowest order functions are supported.

With the definition of a function space we can now discretise functions over a given space into a \verb@GridFunction@ object. The following code computes a discretisation of the tangential trace of a plane wave Maxwell solution $\B{u}(\B{x}) = \B{p}e^{\ii k\B{d}\cdot\B{x}}$, where $\B{p}$ is the polarisation vector.
\begin{lstlisting}[language=Python]
import numpy as np
p = np.array([...])
d = np.array([...])
k = ...
def plane_wave(x):
    return np.exp(1j * np.dot(d, x)) * p

def tangential_trace(x, n, domain_index, result):
    result[:] = np.cross(plane_wave(x), n, axis=0)
    
grid_fun = bempp.api.GridFunction(
	space=rwg_space, fun=tangential_trace)    
\end{lstlisting}

The \verb@tangential_trace@ function defines the tangential
trace of the plane wave in dependence of a given
normal vector \verb@n@, which is passed in during the
discretisation phase. The \verb@GridFunction@ object
by default performs an $\B{L}^2$ projection onto the space \verb@rwg_space@. Oblique projections with a different test space are also supported by specifying the \verb@dual_space@ argument in the constructor of \verb@GridFunction@.

We now define an operator acting on grid functions.
An electric field integral operator is defined on the space $\sdivspaceGamma$ of surface-div-conforming functions. For the discretisation we need test functions from the space of surface-curl-conforming functions $\scurlspaceGamma$. Here, we choose RBC functions, which are explained in Section \ref{sec:spaces}.

\begin{lstlisting}[language=Python]
from bempp.api.operators.boundary import \
	maxwell
rbc_space = bempp.api.function_space(grid, 'RBC', 0)
op = maxwell.electric_field(
	rwg_space, rwg_space, rbc_space, k)
\end{lstlisting}
Each boundary operator in BEM++ takes three parameters, a  domain space, a range space, and a dual space to the range space. For a Galerkin discretisation the range space is not required. However, we use it to implement a Galerkin based operator algebra. This algebra allows us for example to write
\begin{lstlisting}[language=Python]
result = op * grid_fun
\end{lstlisting}
The result is a grid function, which is defined over the range space of the operator. Internally, this is implemented by multiplying the weak form of the operator with the coefficient vector of the grid function, and then solving with the mass matrix between the dual space and the range space to map the result back into a coefficient vector of the range space. This mass matrix solve in practice is only performed if a method requires the coefficient vector of the \verb@result@ grid function, so that the above code snippet is even well-defined if there is no stable mapping from the dual space to the range space. However, then only a representation of the grid function in the dual space is available.

By extending this mechanism to operators, the concept of products of operators is also defined in BEM++. Given two operators \verb@op1@ and 
\verb@op2@ in BEM++ with compatible spaces, their product is a new operator that is implicitly defined through first the 
application of \verb@op2@, followed by a mass matrix solve to map the result into the domain space of \verb@op1@, followed by the 
application of \verb@op1@.

The high-level concept of operators and grid functions extends everywhere into the library. For example, to solve the indirect electric field integral equation (EFIE) with the above plane wave incident field we can write
\begin{lstlisting}[language=Python]
sol = bempp.api.linalg.lu(op, grid_fun)
\end{lstlisting}
to solve the associated EFIE problem using a dense LU
decomposition. The object \verb@sol@ is a grid function
that lives in the domain space of \verb@op@ and is a grid
function that satisfies \verb@grid_fun == op * sol@ up
to numerical errors. Naturally, for larger problems we are interested in iterative solvers and corresponding interfaces to GMRES and CG are also provided.

We will use this operator concept extensively in this paper to formulate the different types of Maxwell integral equations. More details of 
the abstract implementation of operator algebras in BEM++ can be found in \cite{Betcke2017}.

\section{Function spaces}
\label{sec:spaces}
In this section we review the necessary function space definitions for boundary integral formulations of the electromagnetic scattering problem \eqref{eq:maxwell}.
In what follows, we assume that $\Gamma$ consists of a finite number of smooth faces that meet at non-degenerate edges.
This is a reasonable assumption from the point of view of boundary element methods, as we will consider discrete problems on meshed surfaces.

The fundamental difficulty lies in the correct description of trace spaces on $\Gamma$ for solutions of \eqref{eq:maxwell} \cite{Tartar97, 
Buffa2001, Buffa2001a, Buffa2002,BuffaOverview}. In this section, we summarise without proof some of the results of these papers, as they 
form the foundations for the operator definitions and stable implementations presented in later sections. Throughout, we will present small 
code snippets to demonstrate how to instantiate the corresponding spaces in BEM++. More details and references can also be found in the 
overview paper \cite{BuffaHiptmair} and in \cite{MeuryThesis}.

Throughout this paper, we adopt the convention of using bold and non-bold letters for spaces of vector- and scalar-valued functions, respectively. Whenever we write the subscript \text{loc} it is understood to only be used for the unbounded domain $\Omega^{+}$.
For definitions of Sobolev spaces of scalar-valued functions, we refer to \cite[chapter 3]{McLean}.

Let \textbf{op} be one of $\vcurl$, $\vcurl^2$ or $\sdiv$.
Denote by $\B{H}_\text{loc}(\textbf{op},\Omega^\pm)$ the function space defined as $\B{H}_\text{loc}(\textbf{op},\Omega^{\pm}):=\{\B{u}\in \B{L}^2_\text{loc}(\Omega^{\pm}):\textbf{op}\,\B{u}\in \B{L}^2_\text{loc}(\Omega^{\pm})\}$
(for $\sdiv$, the final $\B{L}^2_\text{loc}(\Omega^{\pm})$ should be replaced by the scalar space $L^2_\text{loc}(\Omega^{\pm})$).
To define the necessary function spaces on the surface $\Gamma$, we first define the tangential ($_\text{t}$) and Neumann ($_\text{N}$)
traces on $\Gamma$. These are defined, for $\B{p}\in \B{H}_\text{loc}(\vcurl, \Omega^\pm)$ and
$\B{q}\in \B{H}_\text{loc}(\vcurl^2,\Omega^\pm)$, by
\begin{align}
\gt^\pm\B{p}(\B{x})&:=\lim_{\Omega^\pm\ni\B{x}'\to\B{x}\in\Gamma}\B{p}(\B{x'})\times\bnu(\B{x}),&
\gN^\pm\B{q}(\B{x})&:=\frac{1}{\ii k}\gt^\pm\vcurl\,\B{q}(\B{x}),\label{eq:def_traces}
\end{align}
where the superscripts $\interior$ and $\exterior$ denote the interior and exterior traces, respectively. Note that in our definition $\gN^\pm$ contains an additional factor of $\ii$, which does not appear in \cite{BuffaHiptmair}. The interpretation is that if we normalise the magnetic permittivity and electric permeability to $1$, this definition of $\gN^\pm$ is the tangential trace of the magnetic field data.

In what follows we need the average 
$\average{\cdot}$, and jump, $\jump{\cdot}$ of these traces, defined as \begin{align}
\average{\B{\gamma}_*}\B{f}&:=\frac12(\B{\gamma}_*\exterior \B{f}+\B{\gamma}_*\interior \B{f}),&
\jump{\B{\gamma}_*}\B{f}&:=\B{\gamma}_*\exterior \B{f}-\B{\gamma}_*\interior \B{f}.
\end{align}
Let
$\B{L}^2_\textup{t}(\Gamma)$, defined by
\begin{equation}
\B{L}^2_\textup{t}(\Gamma):=\{\B{u}\in\B{L}^2(\Gamma):\B{u}\cdot\bnu=0\},
\end{equation}
be the space of square integrable tangential functions.
We define the tangential trace space, $\B{H}_\times^{1/2}(\Gamma)$, as in \cite[definition 1]{BuffaHiptmair} by
\begin{equation}\B{H}_\times^{1/2}(\Gamma):=\gtint(\B{H}^1(\Omega\interior))=\left\{\gtint\B{u}:\B{u}\in\B{H}^1(\Omega\interior)\right\}.\end{equation}
The dual of this space with respect to the antisymmetric product,
\begin{align}
\left\langle\B{a},\B{b}\right\rangle_{\tau}&:=
\int_\Gamma\B{a}\cdot(\bnu\times\B{b}),&\text{ for }\B{a},\B{b}\in\B{L}^2_\textup{t}(\Gamma).\label{eq:antisymmetric}
\end{align}
is denoted by $\B{H}_\times^{-1/2}(\Gamma)$.

Due to the assumption that $\Gamma$ consists of a finite number of smooth faces, we may let
$\Gamma=\bigcup_{j=0}^m\Gamma^j$, where
$\Gamma_0$, ..., $\Gamma_m$ are smooth. 
We define the scalar surface divergence of $\gt^\pm\B{u}$, for $\B{u}\in\B{H}_\text{loc}(\vcurl,\Omega^\pm)$, by
\begin{equation}
\sdiv[\Gamma](\gt^\pm\B{u}):=\gn^\pm(\vcurl\,\B{u}),
\end{equation}
where the normal trace, $\gn^\pm$, is defined for $\B{p}\in\B{H}_\text{loc}(\vdiv,\Omega^\pm)$ by
\begin{equation}
\gn^\pm\B{p}:=\lim_{\Omega^\pm\ni\B{x}'\to\B{x}\in\Gamma}\B{p}(\B{x'})\cdot\bnu(\B{x}).
\end{equation}
For a function $\B{u}\in\gt\interior(\B{C}^\infty(\overline{\Omega\interior}))$, this can be calculated using
\begin{equation}\sdiv[\Gamma]\B{u}:=\begin{cases}
\sdiv[j]\B{u}^j &\text{on }\Gamma^j\\
(\B{u}^j\cdot\B{\nu}^{ij}+\B{u}^i\cdot\B{\nu}^{ji})\delta_{ij}&\text{on }\overline{\Gamma^j}\cap\overline{\Gamma^i}
\end{cases},\end{equation}
where $\B{u}^j$ is the restriction of $\B{u}$ to the face $\Gamma^j$,
$\B{\nu}^{ij}$ is the outward pointing tangential normal to $\Gamma^i$ restricted to the edge $\overline{\Gamma^i}\cap\overline{\Gamma^j}$,
$\sdiv[j]$ is the two dimensional divergence computed on the face $\Gamma^i$,
and $\delta_{ij}$ is the Dirac delta distribution with support on the edge $\overline{\Gamma^i}\cap\overline{\Gamma^j}$.
By density, this definition can be extended to $\B{u}\in \B{H}_\times^{-1/2}(\Gamma)$.
We now define the space of surface-div-conforming functions by
\begin{equation}
\sdivspaceGamma:=\{\B\mu\in \B{H}_\times^{-1/2}(\Gamma):\sdiv[\Gamma]\B\mu\in H^{-1/2}(\Gamma)\}.
\end{equation}

The scalar surface curl may be defined, for $\B{u}\in\B{H}^{-1/2}_\times(\Gamma)$, by
\begin{equation}\scurl[\Gamma](\B{u}):=\sdiv[\Gamma](\B{u}\times\B\nu),\end{equation}
and
the space of surface-curl-conforming functions by
\begin{align}
\scurlspaceGamma:=\{\B\nu\times\B\mu:\B\mu\in\sdivspaceGamma\}.
\end{align}

By restricting their domains as in the following theorem, we obtain continuous and surjective traces.
\begin{theorem}
The traces 
\begin{align}
\gt^\pm&:\B{H}_\textup{loc}(\vcurl,\Omega^\pm)\to\sdivspaceGamma\\
\text{and }\gN^\pm&:\B{H}_\textup{loc}(\vcurl^2,\Omega^\pm)\to\sdivspaceGamma
\end{align}
are continuous and surjective.
\end{theorem}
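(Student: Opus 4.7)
The plan is to split the statement into the two maps $\gt^\pm$ and $\gN^\pm$, and to reduce the Neumann case to the tangential case by observing that $\gN^\pm \B{q} = \frac{1}{\ii k} \gt^\pm(\vcurl\,\B{q})$, so that whenever $\B{q}\in \B{H}_\text{loc}(\vcurl^2,\Omega^\pm)$ we have $\vcurl\,\B{q} \in \B{H}_\text{loc}(\vcurl,\Omega^\pm)$ and the Neumann statement will follow once the tangential statement is established, together with an extra argument for surjectivity of the Neumann map.

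For continuity of $\gt^\pm$, I would work first with smooth $\B{u}\in \B{C}^\infty(\overline{\Omega^\pm})$ and use the Green formula
\[
\int_{\Omega^\pm}\!\bigl(\vcurl\,\B{u}\cdot\B{\phi}-\B{u}\cdot\vcurl\,\B{\phi}\bigr)\,\mathrm{d}V = \pm\langle \gt^\pm\B{u},\gt^\pm\B{\phi}\rangle_\tau,
\]
valid for any test field $\B{\phi}\in \B{H}^1(\Omega^\pm)$. Since $\gt^\pm$ maps $\B{H}^1(\Omega^\pm)$ continuously onto $\B{H}^{1/2}_\times(\Gamma)$ by definition of the latter space, and $\B{C}^\infty(\overline{\Omega^\pm})$ is dense in $\B{H}(\vcurl,\Omega^\pm)$, this identity extends $\gt^\pm$ by duality to a continuous operator into $\B{H}^{-1/2}_\times(\Gamma)$. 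To upgrade the image into $\sdivspaceGamma$, I would then invoke the identity $\sdiv[\Gamma](\gt^\pm \B{u})=\gn^\pm(\vcurl\,\B{u})$, which is exactly the definition given in the text; because $\vdiv(\vcurl\,\B{u})=0$, the field $\vcurl\,\B{u}$ lies in $\B{H}_\text{loc}(\vdiv,\Omega^\pm)$ and its normal trace belongs to $H^{-1/2}(\Gamma)$, yielding the required additional regularity with norm control.

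For surjectivity of $\gt^\pm$, given $\B{\lambda}\in \sdivspaceGamma$ I would construct a lifting by solving an auxiliary elliptic problem. A convenient choice on the interior side is to seek $\B{u}\in \B{H}(\vcurl,\Omega^-)$ minimising $\|\vcurl\,\B{u}\|_{\B{L}^2}^2 + \|\B{u}\|_{\B{L}^2}^2$ under the constraint $\gt^-\B{u}=\B{\lambda}$, which by Lax--Milgram applied to the corresponding Euler--Lagrange system in a suitable quotient space provides a bounded right inverse; the exterior case is handled analogously after truncating by a compactly supported cut-off. For surjectivity of $\gN^\pm$ the same strategy applies but one level higher: given $\B{\mu}\in \sdivspaceGamma$, solve the variational problem for $\B{q}\in \B{H}_\text{loc}(\vcurl^2,\Omega^\pm)$ with the natural boundary condition $\gt^\pm(\vcurl\,\B{q}) = \ii k\B{\mu}$ built in weakly, using the bilinear form $(\vcurl\,\vcurl\,\B{q},\vcurl\,\vcurl\,\B{v})+(\B{q},\B{v})$ together with a divergence-controlling penalty so that the form is coercive on the appropriate space.

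The main obstacle I expect is the surjectivity of $\gN^\pm$: one cannot simply take a lifting of $\ii k\B{\mu}$ by $\gt^\pm$ and then invert the curl, since the resulting field is generically not divergence-free. Handling this rigorously requires either a Helmholtz-type decomposition on $\Omega^\pm$ or a carefully chosen augmented variational formulation so that the constructed $\B{q}$ indeed belongs to $\B{H}_\text{loc}(\vcurl^2,\Omega^\pm)$ with controlled norm. The continuity statements, by contrast, reduce to Green's formula and the density of smooth fields, so I expect them to be essentially mechanical once the $\gn^\pm$-regularity of $\sdiv[\Gamma](\gt^\pm\B{u})$ is exploited.
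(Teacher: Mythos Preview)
The paper does not actually prove this theorem; it simply cites \cite[Theorem 4.1]{Buffa2002} and \cite[Lemma 3]{BuffaHiptmair}. Your sketch therefore goes well beyond what the paper offers, and should be judged on its own merits.

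Your continuity argument is the standard one and is essentially what appears in the cited references: Green's formula extends $\gt^\pm$ by density and duality to a bounded map into $\B{H}^{-1/2}_\times(\Gamma)$, and the identity $\sdiv[\Gamma](\gt^\pm\B{u})=\gn^\pm(\vcurl\,\B{u})$ then upgrades the target to $\sdivspaceGamma$ with norm control. The reduction of continuity of $\gN^\pm$ to that of $\gt^\pm$ via $\vcurl$ is also correct.

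There is, however, a genuine gap in your surjectivity argument for $\gt^\pm$. Minimising $\|\vcurl\,\B{u}\|^2+\|\B{u}\|^2$ over the constraint set $\{\B{u}:\gt^-\B{u}=\B{\lambda}\}$ presupposes that this set is nonempty, which is exactly the surjectivity you are trying to establish. Invoking Lax--Milgram on the Euler--Lagrange system does not escape the circularity: the essential boundary condition $\gt^-\B{u}=\B{\lambda}$ still requires an initial lifting before the variational problem is even posed on an affine subspace. The argument in the cited work of Buffa and collaborators is constructive and avoids this: one uses a Hodge-type decomposition of $\B{\lambda}\in\sdivspaceGamma$ into surface-gradient, surface-curl, and harmonic parts, lifts the scalar potentials to the volume via the classical $H^{1/2}(\Gamma)\to H^1(\Omega)$ trace lifting, and assembles from these a field in $\B{H}(\vcurl,\Omega^\pm)$ whose tangential trace is $\B{\lambda}$.

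Once surjectivity of $\gt^\pm$ is in hand, surjectivity of $\gN^\pm$ is more elementary than you suggest. Given $\B{\mu}$, lift $\ii k\B{\mu}$ to some $\B{w}\in\B{H}_\text{loc}(\vcurl,\Omega^\pm)$; correct $\B{w}$ to be divergence-free by subtracting $\nabla\phi$ where $\phi$ solves $\Delta\phi=\sdiv\,\B{w}$ with zero Dirichlet trace (so $\gt^\pm(\nabla\phi)=0$ and the tangential trace is preserved); then, under mild topological assumptions or after extending to a ball, write the corrected field as $\vcurl\,\B{q}$ for some vector potential $\B{q}$, which automatically lies in $\B{H}_\text{loc}(\vcurl^2,\Omega^\pm)$. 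Your concern about inverting the curl is the right one, but the divergence-correction step resolves it without the augmented fourth-order variational formulation you propose.
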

\begin{proof}
See \cite[Theorem 4.1]{Buffa2002}
and \cite[Lemma 3]{BuffaHiptmair}.
\end{proof}

The antisymmetric dual form defined in \eqref{eq:antisymmetric} is intimately connected with the space $\sdivspaceGamma$. In \cite[Lemma 
5.6]{Buffa2002} it is shown that the space $\sdivspaceGamma$ is self-dual with respect to this antisymmetric dual form. Another 
interpretation of this dual form is as the standard $\B{L}^2$ dual between the spaces $\sdivspaceGamma$ and $\scurlspaceGamma$ since 
$\B\psi\in\scurlspaceGamma$ if and only if $\B{\psi}=\bnu\times\B{\xi}$ for some $\B{\xi}\in\sdivspaceGamma$.

Commonly used discretisations of these two spaces are Raviart-Thomas (RT) div-conforming \cite{RaviartThomas} and N\'ed\'elec (NC) curl-conforming \cite{Nedelec} basis functions.
\begin{figure}
\centering
\includegraphics{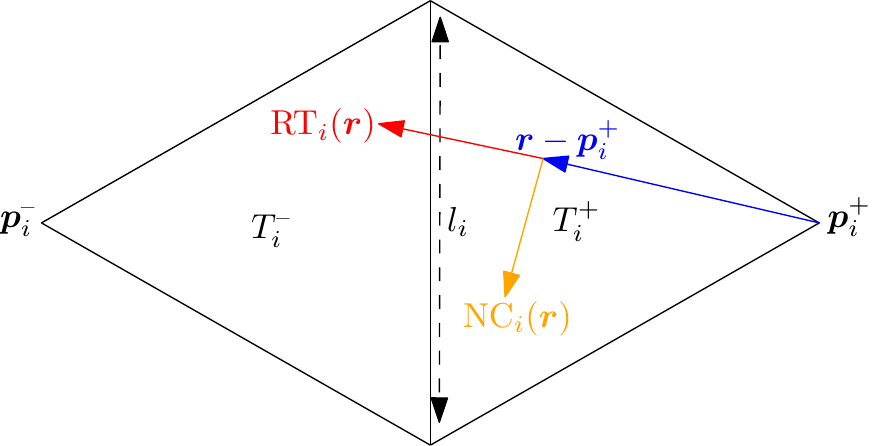}
\caption{Two adjacent triangles on which Raviart--Thomas (RT) and N\'ed\'elec (NC) basis functions are defined.}
\label{fig:rtnc_def}
\end{figure}
For the $i$th edge in a mesh, between two triangles $T_i^\text{+}$ and
$T_i^\text{--}$, the order 0 RT basis function is defined by
\begin{align}
\mathbf{RT}_i(\B{r}):=\begin{cases}
\frac{1}{2A_i^\text{+}}(\mathbf{r}-\mathbf{p}_i^\text{+})\quad&\text{if }\mathbf{r}\in T_i^\text{+}\\
-\frac{1}{2A_i^\text{--}}(\mathbf{r}-\mathbf{p}_i^\text{--})\quad&\text{if }\mathbf{r}\in T_i^\text{--}\\
\mathbf{0}\quad&\text{otherwise}
\end{cases},
\end{align}
where $A_i^\text{+}$ and $A_i^\text{--}$ are the areas of $T_i^\text{+}$ and $T_i^\text{--}$,
and $\B{p}_i^\text{--}$ and $\B{p}_i^\text{+}$ are the corners of $T_i^\text{+}$ and $T_i^\text{--}$ not on the shared edge,
as shown in Figure \ref{fig:rtnc_def}.
For the same edge, the order 0 NC basis function may be defined by
\begin{align}
\mathbf{NC}_i(\B{r}):=\bnu\times\mathbf{RT}_i(\B{r}).\label{eq:def_nc}
\end{align}
Example RT and NC order 0 basis functions are shown in Figure \ref{fig:rt_nc}.
In BEM++, RT and NC spaces may be created with the following lines of Python.
\begin{lstlisting}[language=Python]
rt_space = bempp.api.function_space(grid, "RT", 0)
nc_space = bempp.api.function_space(grid, "NC", 0)
\end{lstlisting}

\begin{figure}
\centering
\includegraphics[width=.39\textwidth]{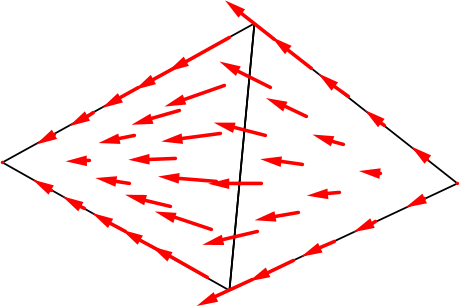}
\hspace{3mm}
\includegraphics[width=.39\textwidth]{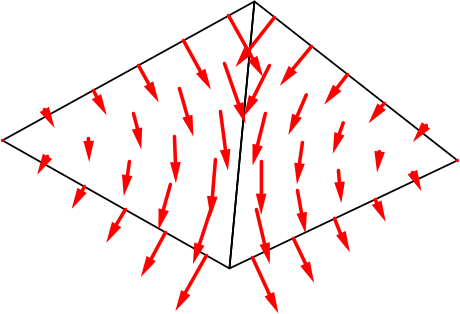}
\caption{A div-conforming Raviart--Thomas (left) and a curl-conforming N\'ed\'elec (right) order 0 basis function.}
\label{fig:rt_nc}
\end{figure}

The RT basis functions are closely related to the Rao--Wilton--Glisson (RWG) basis functions presented in \cite{RWG}. These are defined by
\begin{align}
\mathbf{RWG}_i(\B{r}):=l_i\mathbf{RT}_i(\B{r})=\begin{cases}
\frac{l_i}{2A_i^\text{+}}(\mathbf{r}-\mathbf{p}_i^\text{+})\quad&\text{if }\mathbf{r}\in T_i^\text{+}\\
-\frac{l_i}{2A_i^\text{--}}(\mathbf{r}-\mathbf{p}_i^\text{--})\quad&\text{if }\mathbf{r}\in T_i^\text{--}\\
\mathbf{0}\quad&\text{otherwise}
\end{cases},
\end{align}
where $l_i$ is the length of the shared edge, and all other terms are as above. We define the scaled curl-conforming dual basis functions of the RWG functions as
\begin{align}
\mathbf{SNC}_i(\B{r}):=l_i\mathbf{NC}_i(\B{r})=\bnu\times\mathbf{RWG}_i(\B{r}).
\end{align}
In BEM++, RWC and SNC spaces may be created with the following lines of Python.
\begin{lstlisting}[language=Python]
rwg_space = bempp.api.function_space(grid, "RWG", 0)
snc_space = bempp.api.function_space(grid, "SNC", 0)
\end{lstlisting}

An overview of other bases that can be used to discretise $\sdivspaceGamma$ and $\scurlspaceGamma$,
as well as other spaces, can be found in \cite{FenicsBook4}.

\subsection{Buffa--Christiansen spaces}
\label{sec:bc_spaces}
The RT basis functions have a subspace that is quasi-orthogonal to the space of curl-conforming N\'ed\'elec functions
\cite[section 3.1]{ChristiansenNedelec}.
Due to this, the antisymmetric bilinear form, as defined in \eqref{eq:antisymmetric}, on the discrete RT space is not inf-sup stable.
The motivation for Buffa--Christiansen (BC) basis functions is to find a space of functions that are div-conforming but behave like curl-conforming 
functions, as this will recover inf-sup stability.

To define such a basis, we first take the space of div-conforming RWG functions on a barycentric refinement of the original grid.
Given the $i$th edge of the original coarse grid, the associated BC function $\mathbf{BC}_i$ is now defined as a linear 
combination of these RWG functions on the barycentric refinement such that this linear combination is approximately tangentially (and therefore 
curl-) conforming across the $i$th edge. A typical BC basis function is shown in Figure \ref{fig:bc}. The full definition of BC functions is 
given in \cite{BCSpacesDef} in which also the following inf-sup stability result is shown. This result implies the stability of the Gram matrix 
between BC functions and RWG functions with respect to the dual pairing in \eqref{eq:antisymmetric}.

\begin{equation}
\inf_{\B{u}\in\text{RWG}}\sup_{\B{v}\in\text{BC}}\frac{\langle\B{u},\B{v}\rangle_\tau}{\|\B{u}\|_{-\tfrac12,\sdiv}\|\B{v}\|_{-\tfrac12,\sdiv}}>\frac1C.
\end{equation}

\begin{figure}
\centering
\includegraphics[width=.9\textwidth]{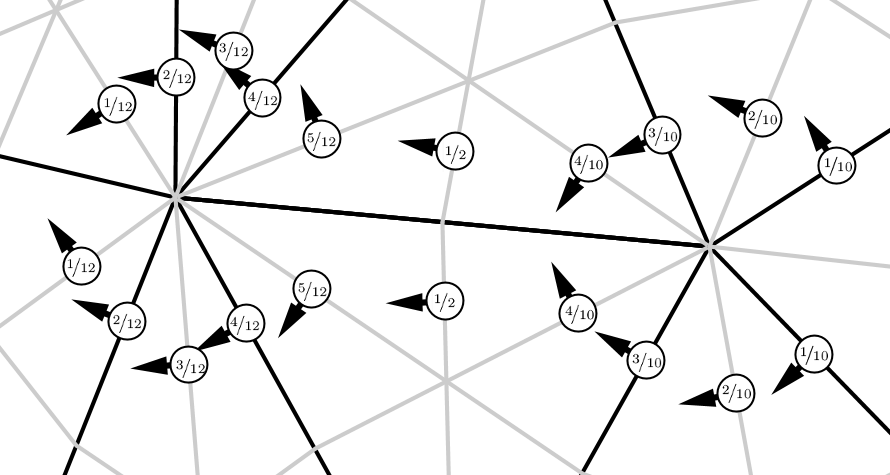}
\caption{The coefficients used to define a BC basis function in terms of RWG functions on the barycentrically refined grid.}
\label{fig:bc-def}
\end{figure}

\begin{figure}
\centering
\includegraphics[width=.8\textwidth]{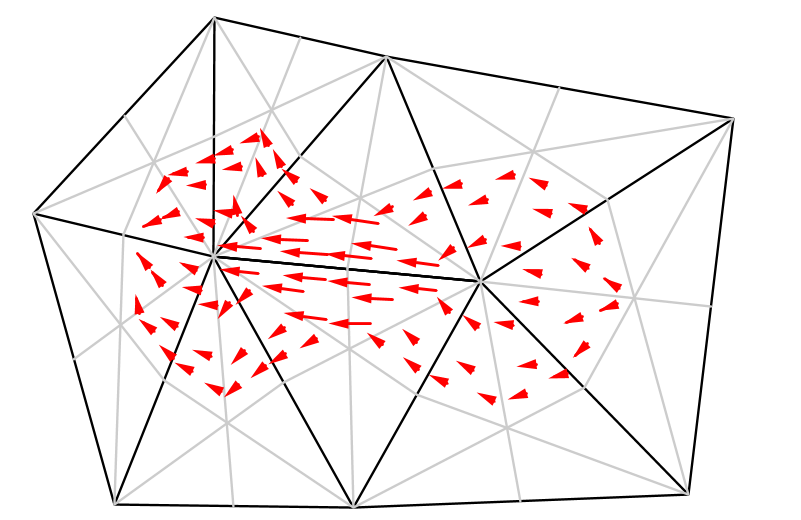}
\caption{A div-conforming and quasicurl-conforming Buffa--Christansen basis function, defined using RWG functions on barycentrically refined triangles.}
\label{fig:bc}
\end{figure}

For a BC basis function, $\mathbf{BC}_i$, we may also define the rotated Buffa--Christiansen (RBC) basis function, in an analogous way to
\eqref{eq:def_nc}, by
\begin{align}
\mathbf{RBC}_i(\B{r}):=\bnu\times\mathbf{BC}_i(\B{r}).
\end{align}
In BEM++, BC and RBC spaces may be created with the following lines of Python.
\begin{lstlisting}[language=Python]
bc_space = bempp.api.function_space(grid, "BC", 0)
rbc_space = bempp.api.function_space(grid, "RBC", 0)
\end{lstlisting}

\begin{figure}
\centering
\begin{lstlisting}[language=Python]
import numpy as np
import bempp.api
grid = bempp.api.shapes.regular_sphere(3)
rwg_space = bempp.api.function_space(grid, "B-RWG", 0)
rbc_space = bempp.api.function_space(grid, "RBC", 0)
snc_space = bempp.api.function_space(grid, "B-SNC", 0)
from bempp.api.operators.boundary.sparse \
	import identity as ident
id1 = ident(rwg_space, rwg_space, snc_space).weak_form()
id2 = ident(rwg_space, rwg_space, rbc_space).weak_form()
print(np.linalg.cond(
		bempp.api.as_matrix(id1).todense()))
print(np.linalg.cond(
		bempp.api.as_matrix(id2).todense()))
\end{lstlisting}
\caption{Computing the condition number of the RWG/SNC and the RWG/RBC mass matrix in BEM++. The values computed are $7.7\times10^{17}$ and $3.60$.}
\label{fig:conditioning_code}
\end{figure}

We can use BEM++ to compare the stability of dual pairings of RWG spaces with SNC and RBC spaces. The code in Figure \ref{fig:conditioning_code} computes the condition number of
mass matrices on a regular sphere grid generated
from the RWG/SNC pairing (\verb@id1@) and the RWG/RBC pairing (\verb@id2@).

For the condition number of \verb@id1@ the code computes
a value of $7.7\times 10^{17}$ and for the condition number of
\verb@id2@ it computes a value of $3.60$. We note that
in the definitions of the spaces in Figure \ref{fig:conditioning_code} we have used
the identifiers \verb@B-RWG@ and \verb@B-SNC@ instead of
\verb@RWG@ and \verb@SNC@. The reason is that the
RBC spaces are defined over barycentric refinements of
the grid. So we need to tell also the other space
definitions to internally use barycentric refinements
of the grid (even though the actual spaces live on the coarse grid), which is done by prepending \verb@B-@ in
the definitions.

\section{Operators}
\label{sec:operators}
BEM++ provides the magnetic and electric field domain potential and boundary operators.

First, we define the electric and magnetic potential operators (see \cite{BuffaHiptmair}),
$\popE,\popH:\sdivspaceGamma\to\B{H}_\text{loc}(\vcurl^2,\Omega\exterior\cup\Omega\interior)$,
by
\begin{align}
\popE(\B{p})(\B{x})&:=\ii k\int_\Gamma\B{p}(\B{y})G(\B{x},\B{y})\,d\B{y}
-\frac1{\ii k}\nabla_{\B{x}}\int_\Gamma\sdiv[\Gamma]\B{p}(\B{y})G(\B{x},\B{y})\,d\B{y},\\
\popH(\B{p})(\B{x})&:=
\vcurl[\B{x}]\int_\Gamma\B{p}(\B{y})G(\B{x},\B{y})\,d\B{y},
\end{align}
where $\displaystyle G(\B{x},\B{y})=\frac{e^{\ii k|\B{x}-\B{y}|}}{4\pi|\B{x}-\B{y}|}$ is the Green's function of a three-dimensional Helmholtz problem.

The definition of the electric potential operator, $\popE$, used here differs from that used in \cite{BuffaHiptmair} by a factor of $\ii$, corresponding to the modified definition of $\gN^{\pm}$.

With the electric and magnetic field operators we obtain the following representation formula.
\begin{theorem}
If $\bE\in\B{H}_\textup{loc}(\vcurl^2,\Omega\interior\cup\Omega\exterior)$ is a solution of \eqref{eq:maxwell_pde}, then 
\begin{align}
\bE(\B{x}) = -\popH([\gt]_\Gamma\bE)(\B{x})-\popE([\gN]_\Gamma\bE)(\B{x})\label{eq:general_rep}.
\end{align}
\end{theorem}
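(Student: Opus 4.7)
The natural approach is the classical Stratton--Chu derivation: fix $\B{x}\in\Omega\interior\cup\Omega\exterior$ and apply the second vector Green's identity
\begin{align*}
\int_D\bigl(\B{u}\cdot\vcurl\vcurl\B{v}-\B{v}\cdot\vcurl\vcurl\B{u}\bigr)\,dV=\int_{\partial D}\bigl(\B{v}\times\vcurl\B{u}-\B{u}\times\vcurl\B{v}\bigr)\cdot\B{n}\,dS
\end{align*}
separately in $\Omega\interior$ and in $\Omega\exterior$, using $\B{u}=\bE$ and the dyadic test field $\B{v}_\B{a}(\B{y})=G(\B{x},\B{y})\B{a}$ for an arbitrary constant $\B{a}\in\mathbb{C}^3$. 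Adding the two resulting representations should collapse the boundary contributions on $\Gamma$ into the desired jumps.

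On $\Omega\interior$ (with a small ball $B_\epsilon(\B{x})$ excised if $\B{x}$ lies there), combining the PDE $\vcurl\vcurl\bE-k^2\bE=0$ with $(-\Delta-k^2)G(\B{x},\cdot)=\delta_\B{x}$ and letting $\epsilon\to 0$ yields $\bE(\B{x})\cdot\B{a}$ when $\B{x}\in\Omega\interior$ and $\B{0}$ otherwise, while the $\Gamma$ contribution (with outward normal $\bnu$) rearranges into the interior traces $\gtint\bE$ and $\gNint\bE$ paired against the kernels defining $\popH$ and $\popE$. The same computation on $\Omega\exterior\cap B_R\setminus\overline{B_\epsilon(\B{x})}$ gives an analogous exterior representation in terms of $\gtext\bE$ and $\gNext\bE$ with the opposite sign on $\Gamma$ because the outward normal there is now $-\bnu$. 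The contribution on $\partial B_R$ vanishes as $R\to\infty$ by pairing the Silver--M\"uller condition \eqref{eq:maxwell_silver_mueller} against the matching radiation behaviour of $G(\B{x},\cdot)$: the leading terms cancel and the remainder decays one order faster than the integrand.

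Summing the interior and exterior representations, one of the two volume contributions produces $\bE(\B{x})$ while the $\Gamma$ contributions combine into differences $\gtext\bE-\gtint\bE=\jump{\gt}\bE$ and $\gNext\bE-\gNint\bE=\jump{\gN}\bE$ paired against the kernels of $\popH$ and $\popE$; the arbitrariness of $\B{a}$ then upgrades the scalar identity to the stated vector identity. The main obstacle I anticipate is the careful bookkeeping of the signs introduced by the two opposite normal orientations, together with justifying the boundary integrals on $\Gamma$ under the weak regularity $\bE\in\B{H}_\textup{loc}(\vcurl^2,\cdot)$: strictly speaking these should be interpreted as the dual pairings on $\sdivspaceGamma$ guaranteed by the preceding trace theorem, with the whole computation first carried out for smooth $\bE$ and then extended by density.
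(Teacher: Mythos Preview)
The paper does not actually give a proof of this theorem: its entire argument is the single line ``See \cite[section 4]{BuffaHiptmair}.'' Your outline is precisely the classical Stratton--Chu derivation that the Buffa--Hiptmair reference carries out, so in that sense you are reproducing the cited proof rather than diverging from it.

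Two remarks on points you should tighten. First, the test field $\B{v}_\B{a}=G(\B{x},\cdot)\B{a}$ does \emph{not} satisfy the Maxwell equation away from $\B{x}$: one has $\vcurl\vcurl\B{v}_\B{a}-k^2\B{v}_\B{a}=\nabla(\B{a}\cdot\nabla G)$, so the raw Green identity produces, in addition to the tangential traces, a boundary term involving the normal component $\bnu\cdot\bE$. Converting that term into the $\sdiv[\Gamma]$ piece of $\popE$ requires using $\sdiv\bE=0$ (a consequence of the PDE for $k\neq 0$) together with surface integration by parts; this is the genuine content hidden in your phrase ``rearranges into.'' Second, the theorem as stated in the paper references only \eqref{eq:maxwell_pde} and not the radiation condition \eqref{eq:maxwell_silver_mueller}, yet you invoke Silver--M\"uller to kill the $\partial B_R$ term. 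You are right to do so---the representation formula on the unbounded component needs a radiation hypothesis---but you should flag that you are supplying an assumption the statement omits.
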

\begin{proof}
See \cite[section 4]{BuffaHiptmair}.
\end{proof}
Once the jumps of the traces of the solution are known or estimated on $\Gamma$, the representation formula \eqref{eq:general_rep}
can be used to find the solution at points in $\Omega^\pm$.
In BEM++ the electric and magnetic domain potential operators are available to evaluate this representation formula.
\begin{lstlisting}[language=Python]
from bempp.api.operators.potential import maxwell
eval_points = ...
e_pot = maxwell.electric_field(
	magnetic_space, eval_points, k)
m_pot = maxwell.magnetic_field(
	electric_space, eval_points, k)
field = - e_pot * magnetic_jump - m_pot * electric_jump
\end{lstlisting}
The domain is represented by a $3\times N$ array of $N$
points away from the boundary $\Gamma$. By default the potential operators are assembled using $\mathcal{H}$-matrices and then for given trace data evaluated by a fast $\mathcal{H}$-matrix/vector product \cite{MR3445676}.

Taking traces of the electric and magnetic field potential operators we arrive at the electric boundary operator, $\bopE:\sdivspaceGamma\to\sdivspaceGamma$, and the magnetic boundary operator, $\bopH:\sdivspaceGamma\to\sdivspaceGamma$. These are defined by
\begin{align}
\bopE&:=\average{\gt}\popE,&
\bopH&:=\average{\gt}\popH\label{eq:avgs}.
\end{align}
Additionally, we define the identity operator, $\bopI$, that maps every function to itself.
Because of the symmetry between electric and magnetic fields, the average Neumann traces can be written in terms of $\bopE$ and $\bopH$ as follows:
\begin{align}
\average{\gN}\popE&=\bopH,&
\average{\gN}\popH&=-\bopE.\label{eq:avg_neumann}
\end{align}
The following jump conditions can be derived \cite[Theorem 7]{BuffaHiptmair}.
\begin{align}
\jump{\gt}\popE&=\jump{\gN}\popH=0,&
\jump{\gN}\popE&=\jump{\gt}\popH=-\bopI.\label{eq:jumps}
\end{align}
Combining \eqref{eq:avgs},\eqref{eq:avg_neumann} and \eqref{eq:jumps} gives 
\begin{align}
\gt\exterior\popE&=\bopE,&
\gN\exterior\popE&=-\tfrac12\bopI+\bopH,\\
\gt\exterior\popH&=-\tfrac12\bopI+\bopH,&
\gN\exterior\popH&=-\bopE.
\end{align}
for the exterior traces and 
\begin{align}
\gt\interior\popE&=\bopE,&
\gN\interior\popE&=\tfrac12\bopI+\bopH,\\
\gt\interior\popH&=\tfrac12\bopI+\bopH,&
\gN\interior\popH&=-\bopE.
\end{align}
for the interior traces.

For given \verb@domain@, \verb@range_@ and \verb@dual_to_range@ spaces the electric and magnetic field boundary operators in BEM++ are defined by
\begin{lstlisting}[language=Python]
from bempp.api.operators.boundary import maxwell
efie = maxwell.electric_field(
	domain, range_, dual_to_range, k)
mfie = maxwell.magnetic_field(
	domain, range_, dual_to_range, k)
\end{lstlisting}

To obtain standard RWG discretisations of these operators we choose RWG as domain and range space, and SNC as dual to range space. It is 
important to note that internally we use the formulations of the weak forms of these operators given in \cite{BuffaHiptmair}, which are 
based on the antisymmetric dual form $\langle\cdot,\cdot\rangle_{\tau}$ and not on the standard $\B{L}^2$ dual form. Hence, the dual spaces are 
the non-rotated spaces RWG and BC for the discretisation, and when SNC is passed as dual to range space, it is internally interpreted as an 
RWG space for the generation of the discrete weak form. 
The rotated spaces SNC and RBC play a role in the operator algebra used to form the mass matrices for the $\langle\cdot, \cdot\rangle_{\tau}$ dual form by means of the standard $\B{L}^2$ dual form.



\subsection{The Calder\'on projector and its discretisation}
\label{sec:calderon}
Using the boundary operators in the previous section, we define the multitrace operator $\bopA$ by
\begin{equation}
\bopA:=\begin{bmatrix}
\bopH&\bopE\\
-\bopE&\bopH\\
\end{bmatrix}.
\end{equation}
We then define the exterior Calder\'on projector, $\mathcal{C}\exterior$, as follows.
\begin{equation}
\mathcal{C}\exterior := \tfrac12\bopI-\bopA = 
\begin{bmatrix}
\tfrac12\bopI-\bopH
&
-\bopE
\\
\bopE
&
\tfrac12\bopI-\bopH
\end{bmatrix}.
\end{equation}

It is well known \cite{BuffaHiptmair} that, given any arbitrary trace data
$(\B{a},\B{b})\in \sdivspaceGamma^2$
the product $$\mathcal{C}\exterior\begin{bmatrix}\B{a}\\\B{b}\end{bmatrix}
$$
defines a compatible pair of Cauchy data, and
$$
\left[\mathcal{C}\exterior\right]^2\begin{bmatrix}\B{a}\\\B{b}\end{bmatrix} = \mathcal{C}\exterior\begin{bmatrix}\B{a}\\\B{b}\end{bmatrix}
$$
from which we obtain $\left[\mathcal{C}\exterior\right]^2 = \mathcal{C}\exterior$. Using this identity and the representation $\mathcal{C}\exterior = \tfrac12\bopI-\bopA$ we obtain
\begin{equation}
\label{eq:calderon_square}
\bopA^2 = \tfrac14\bopI.
\end{equation}
This relationship is crucial for preconditioning numerical methods based on the Calder\'on projector and any discretisation scheme should preserve this property.

Denote by
\begin{equation}
A:=\begin{bmatrix}
H_1&E_1\\-E_2&H_2
\end{bmatrix},\label{def:discrete_multitrace}
\end{equation}
the discretisation of the operator $\bopA$. Here,
$E_1$ and $E_2$ are discretisations of electric field operators
and $H_1$ and $H_2$ are discretisations of magnetic field operators.

We want to numerically satisfy the relationship in \eqref{eq:calderon_square}. Hence, we require that
$$
\begin{bmatrix}
H_1&E_1\\-E_2&H_2
\end{bmatrix}
\begin{bmatrix}
M_1^{-1} & \\
 & M_2^{-1}
\end{bmatrix}
\begin{bmatrix}
H_1&E_1\\-E_2&H_2
\end{bmatrix}
\approx \frac{1}{4}
\begin{bmatrix}
M_1&\\&M_2
\end{bmatrix},
$$
where $M_1$ and $M_2$ are the corresponding mass matrices between the dual spaces and range spaces of the operators in the first line, and in the second line, respectively.

\begin{table}
\centering
\begin{tabular}{|c|l|c|c|c|}
\hline
Matrix&Operator&Domain&Range&Dual to Range\\
\hline
\hline
$H_1$&Magnetic&RWG&RWG&RBC\\
\hline
$E_1$&Electric&BC&RWG&RBC\\
\hline
$E_2$&Electric&RWG&BC&SNC\\
\hline
$H_2$&Magnetic&BC&BC&SNC\\
\hline
\end{tabular}
\caption{Spaces to use when discretising the multitrace operator, $\bopA$.}\label{table:multitrace_spaces}
\end{table}

In order to satisfy the above relationship it is crucial that $M_1$ and $M_2$ are well-conditioned mass matrices. A choice of spaces for the operators that achieves this goal, is shown in Table \ref{table:multitrace_spaces}. These choices of spaces lead to all mass matrices in the discretisation of $\bopA^2$ being the invertible RWG-RBC or BC-SNC pairings. The choice of spaces in Table \ref{table:multitrace_spaces} is based on representing the tangential trace with an RWG space and the Neumann trace with a BC space. Alternatively, one could use BC for the electric component and RWG for the magnetic component. This would lead to a discretisation in which $E_1$ and $E_2$ are swapped, and $H_1$ and $H_2$ are swapped.

Using the BEM++ library, the stable multitrace operator may be created using the following lines of Python.
\begin{lstlisting}[language=Python]
from bempp.api.operators.boundary import maxwell
multitrace = maxwell.multitrace_operator(grid, k)
\end{lstlisting}
We may then create the exterior Calder\'on projector with the following lines.
\begin{lstlisting}[language=Python]
from bempp.api.operators.boundary import sparse
identity = sparse.multitrace_identity(
    grid, spaces="maxwell")
calderon = 0.5 * identity - multitrace
\end{lstlisting}

\begin{figure}
\begin{lstlisting}[language=Python]
import bempp.api
from bempp.api.operators.boundary import maxwell
from bempp.api.operators.boundary import sparse
import numpy as np

k = 2

grid = bempp.api.shapes.cube(h=0.1)
multitrace = maxwell.multitrace_operator(grid, k)
identity = sparse.multitrace_identity(
    grid, spaces='maxwell')

calderon = 0.5 * identity - multitrace

def tangential_trace(x, n, domain_index, result):
    result[:] = np.cross(np.array([1, 0, 0]), n)

electric_trace = bempp.api.GridFunction(
    space=calderon.domain_spaces[0], 
    fun=tangential_trace,
    dual_space=calderon.dual_to_range_spaces[0])

magnetic_trace = bempp.api.GridFunction(
    space=calderon.domain_spaces[1], 
    fun=tangential_trace,
    dual_space=calderon.dual_to_range_spaces[1])

traces_1 = calderon * [electric_trace, magnetic_trace]
traces_2 = calderon * traces_1
electric_error = (traces_2[0] - traces_1[0]).l2_norm() \
	/ traces_1[0].l2_norm()
magnetic_error = (traces_2[1] - traces_1[1]).l2_norm() \
	/ traces_1[1].l2_norm()
\end{lstlisting}
\caption{Applying the Calder\'on projector to the tangential trace of the constant vector $(1, 0, 0)$ for the electric and magnetic trace, and computing the error in the magnetic and electric trace between the application of $\left[\mathcal{C}\exterior\right]^2$ and $\mathcal{C}\exterior$ to this trace data.}
\label{fig:calderon_code}
\end{figure}

A complete example for a computation with the Calder\'on projector is given in Figure \ref{fig:calderon_code}. It takes as electric and magnetic trace data the tangential trace of the constant vector $(1, 0, 0)$. This is obviously not a pair of compatible Cauchy data for Maxwell. It then applies the Calder\'on projector to obtain the numerically compatible pair of Cauchy data \verb@traces_1@ and then applies the Calder\'on projector one more time to obtain the pair of traces \verb@traces_2@. In a stable implementation, both, \verb@traces_1@ and \verb@traces_2@ should agree up to discretisation error, and indeed, the error \verb@electric_error@ in the electric component is $9.8\times 10^{-3}$ and the error \verb@magnetic_error@ in the magnetic component is $7.4\times 10^{-3}$.

It is important to note that we need to choose different discretisation spaces for the electric and magnetic trace. For the electric trace we choose RWG basis functions, and for the magnetic trace we choose BC basis functions.

\begin{figure}
\begin{subfigure}{\textwidth}
  \centering
  \includegraphics[scale=.15]{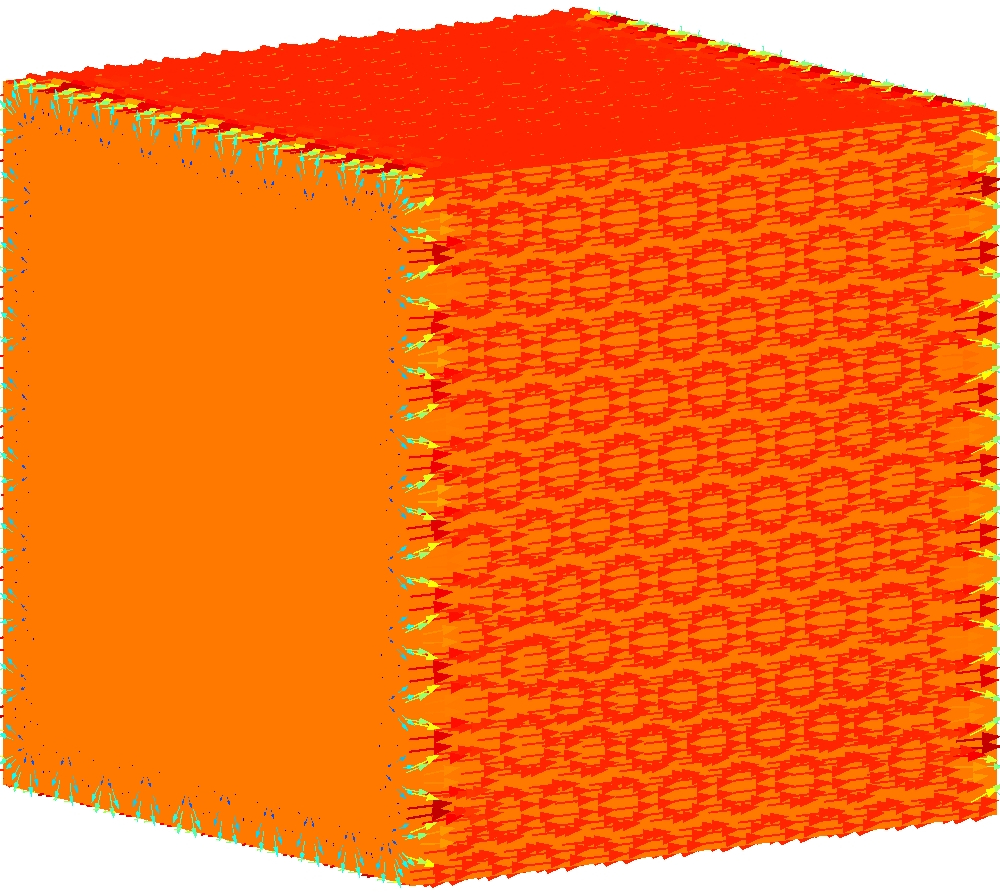}
  \includegraphics[scale=.15]{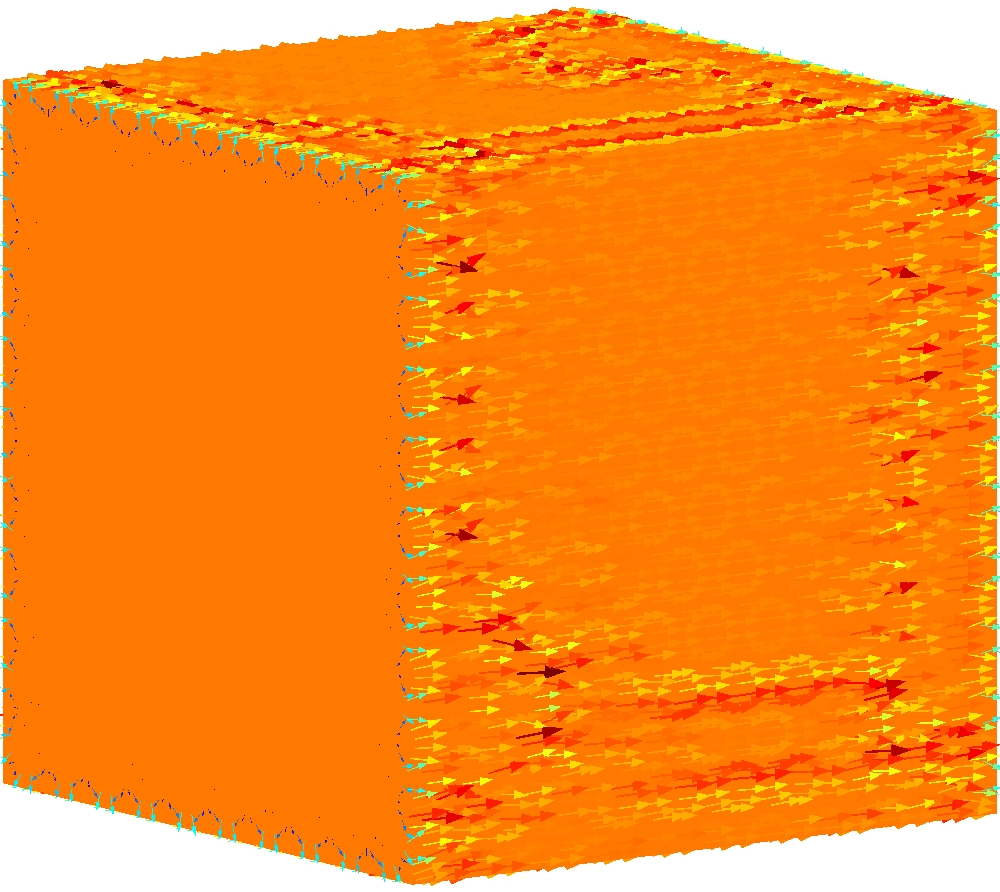}\\
  \caption{Approximations of the tangential component of $(1,0,0)$ in RWG (left) and BC (right) spaces on a discretised cube with 2202 edges.}
  \label{fig:f1_f2}
\end{subfigure}\\

\begin{subfigure}{\textwidth}
  \centering
  \includegraphics[scale=.15]{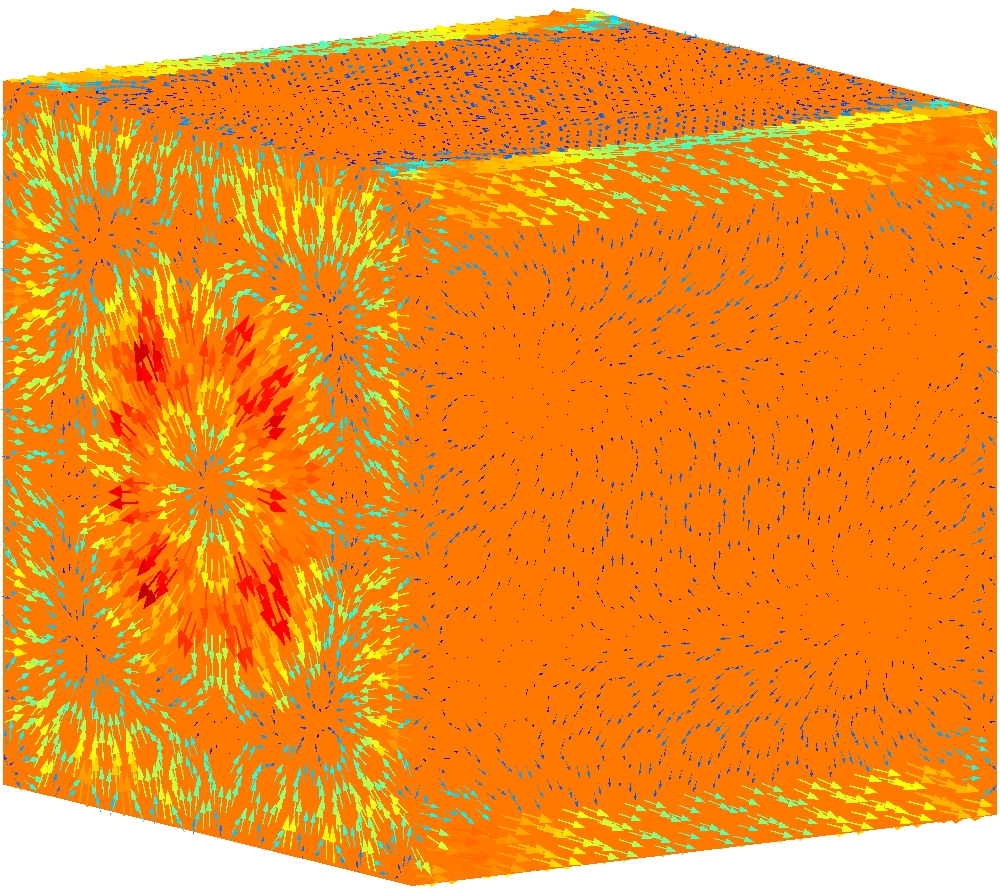}
  \includegraphics[scale=.15]{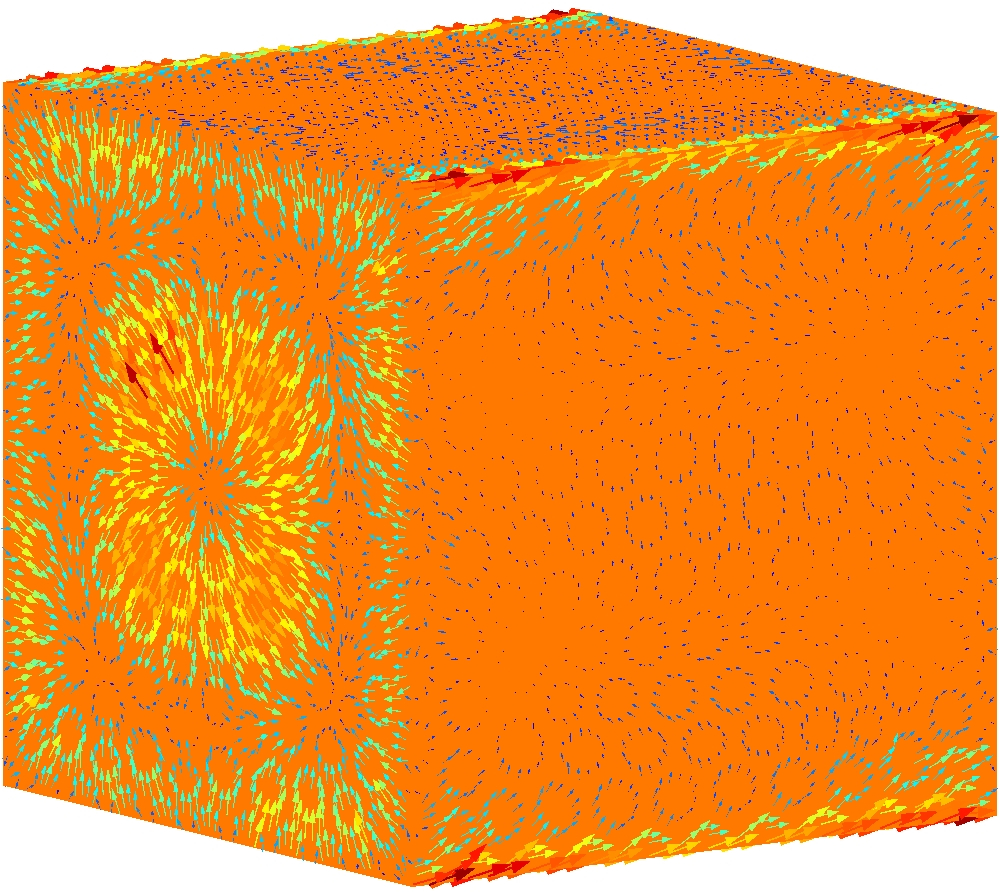}\\
  \caption{The result of applying the Calder\'on projector to the functions in Figure \ref{fig:f1_f2}. These functions
are (up to discretisation error) valid exterior Maxwell Cauchy data.}
  \label{fig:f1_f2_out}
\end{subfigure}
\caption{Visualisation of the Calder\'on projector applied to non-compatible Cauchy data.}
\label{fig:calderon_visualized}
\end{figure}

In Figure \ref{fig:calderon_visualized}, we show the electric and magnetic trace data obtained from taking the tangential trace of the vector $(1, 0, 0)$ (upper plot), and the result of applying the Calder\'on projector to the incompatible pair of Cauchy data (bottom plot).

\subsection{Implementational Details}

The discrete multitrace operator $A$ consists of the two magnetic field operator discretisations $H_1$ and $H_2$, and the two electric field operator discretisations $E_1$ and $E_2$. In practice, we only create two operators $\tilde{E}$, and $\tilde{H}$, using RWG basis functions
on the barycentrically refined grid. Let $\tilde{M}$ be the $L^2$ mass matrix associated with this RWG space, defined by
$$
\tilde{M}_{ij} = \int_{\Gamma} \mathbf{RWG}_i\cdot \mathbf{RWG}_j,
$$
where $\mathbf{RWG}_i$ is the $i$th RWG basis function on the barycentrically refined grid. Let now $M_{RWG}$ be the mass matrix obtained from trial functions in the RWG space on the barycentrically refined grid, and test functions from the original RWG space on the coarse grid. 
We correspondingly define the mass matrix $M_{BC}$
with test functions from the BC space on the original coarse grid. The operators $E_i$, and $H_i$, $i=1,2$, are now given as
\begin{align}
H_1 &= M_{BC}\tilde{M}^{-1}\tilde{H}\tilde{M}^{-1}M_{RWG}^T,\quad
E_1 = M_{BC}\tilde{M}^{-1}\tilde{H}\tilde{M}^{-1}M_{BC}^T\nonumber,\\
H_2 &= M_{RWG}\tilde{M}^{-1}\tilde{H}\tilde{M}^{-1}M_{BC}^T,\quad
E_2 = M_{RWG}\tilde{M}^{-1}\tilde{H}\tilde{M}^{-1}M_{RWG}^T.\nonumber
\end{align}
In \cite{BCPrecond}, a similar construction of the matrices is suggested. The difference is that there permutation matrices that represent the basis functions on the coarse mesh in terms of basis functions on the barycentric refinement are stated explicitly.

The implicit construction here has the advantage that it is independent of the particular space. All that is needed is the ability to construct mass matrices, which is often already available. A potential performance pitfall is the application of the mass matrix inverse of $\tilde{M}$ for each matrix vector product with $E_i$ or $E_j$. We automatically precompute the LU decomposition of $\tilde{M}$. Even for fairly large meshes this is done in a few seconds.

\section{Electric field integral equation (EFIE)}
\label{sec:efie}
We now turn our attention to the EFIE. The EFIE is widely used in applications for low-frequency scattering from closed and open surfaces. In its strong form the indirect EFIE to find the scattered field $\bE\scat$ in \eqref{eq:maxwell} can be written as (see \cite{BuffaHiptmair})
\begin{equation}
\bopE\B\lambda = \gt\exterior\bE\inc.
\label{eq:efie_indirect}
\end{equation}
Correspondingly, the direct EFIE can be written as
\begin{equation}
\bopE\B\pi = (\tfrac12\bopI+\bopH)\gt\exterior\bE\inc.
\label{eq:efie_direct}
\end{equation}

The direct EFIE is derived from the first line of the exterior Calder\'on projector. The unknown $\B\pi$ has a physical meaning as the Neumann trace of the scattered field. Once, the solution $\B\lambda$ for the indirect EFIE or $\B\pi$ for the direct EFIE is available the solution $\bE\scat$ can be computed using
\begin{equation}
\label{eq:efie_indirect_rep}
\bE\scat = -\mathcal{E}\B\lambda
\end{equation}
for the indirect EFIE, and
\begin{equation}
\label{eq:efie_direct_rep}
\bE\scat = \mathcal{H}\gt\exterior\bE\inc-\mathcal{E}\B\pi
\end{equation}
for the direct EFIE. 

By itself the EFIE is ill-conditioned, making necessary either direct solvers or efficient preconditioners. The Calder\'on identities 
described in Section \ref{sec:calderon} provide an efficient preconditioning strategy. From the top-left block of $\bopA^2=\frac14\bopI$ it 
follows that $\bopE^2=-\tfrac14\bopI+\bopH^2$. The eigenvalues of $\bopE$ accumulate at $0$ and $\infty$ making discretisations of this 
operator highly ill-conditioned. However, the operator $\bopH$ is compact on smooth surfaces \cite[Section 5.5]{NedelecBook}. Hence, the 
eigenvalues of $\bopE^2$ accumulate at $-\frac{1}{4}$. Implementations of this self-regularising property depend on the ability to perform 
the operator product $\bopE^2$ in a stable way. In Section \ref{sec:calderon}, we saw how stable operator products are implemented, 
based on Buffa--Christiansen bases, and this property will be used here to implement the preconditioning strategy. Further details on the 
Calder\'on preconditioner for the EFIE can also be found in \cite{BCPrecond}.

The numerical implementation of the EFIE is typically based on discretising the operator $\bopE$ in \eqref{eq:efie_indirect} and \eqref{eq:efie_direct} by pairs of 
RWG/SNC trial and dual spaces. For the direct EFIE, this implies that a discretisation of the Calder\'on projector is used, in which the 
tangential trace data is represented with BC basis functions and the Neumann trace data is represented with RWG basis functions.

\begin{figure}
\centering
\begin{lstlisting}[language=Python]
import bempp.api
import numpy as np

from bempp.api.operators.boundary.maxwell \
    import multitrace_operator
from bempp.api.operators.potential.maxwell \
    import electric_field

grid = ...
k = ...

def incident_field(x):
    return np.array([np.exp(1j * k * x[2]), 
                     0. * x[2], 0. * x[2]])

def tangential_trace(x, n, domain_index, result):
    result[:] = np.cross(incident_field(x), n, axis=0)    
multitrace = multitrace_operator(grid, k)
bc_space = multitrace.range_spaces[1]
snc_space = multitrace.dual_to_range_spaces[1]
    
grid_fun = bempp.api.GridFunction(
    bc_space, fun=tangential_trace,
    dual_space=snc_space)

E2 = -multitrace[1, 0]
E1 = multitrace[0, 1]
op = E1 * E2
rhs = E1 * grid_fun

sol, info = bempp.api.linalg.gmres(
    op, rhs, use_strong_form=True)
eval_points = ...
efie_pot = electric_field(sol.space, eval_points, k)
field = -efie_pot * sol
\end{lstlisting}
\caption{Code snippet to solve a preconditioned
electric field problem.}
\label{fig:efie_preconditioned}
\end{figure}

In Figure \ref{fig:efie_preconditioned} we show the BEM++ implementation of the Calder\'on preconditioned indirect EFIE, based on the stable formulation of the multitrace operator $\bopA$. The EFIE operator is $E_2$ from the discretisation of $\bopA$, which maps from the RWG to the BC space. The preconditioning operator is $E_1$, which maps from the BC space to the RWG space. Correspondingly, the right-hand side incident wave is discretised using BC functions to be compatible with the preconditioning operator. The solution \verb@sol@ lives in the RWG space, which is the domain space of $E_2$. The GMRES routine has the
additional parameter \verb@use_strong_form=True@.
This enables mass matrix preconditioning and further reduces the number of iterations.
At the end we discretise a domain potential operator
over the RWG space of the solution function \verb@sol@.
This evaluates the potential at arbitrary points in the
domain.

We emphasise that BEM++ automatically only
assembles a single electric field operator on the barycentric refinement of the grid. The magnetic components of the multitrace operator are not assembled as they are not needed for the solution of the problem.

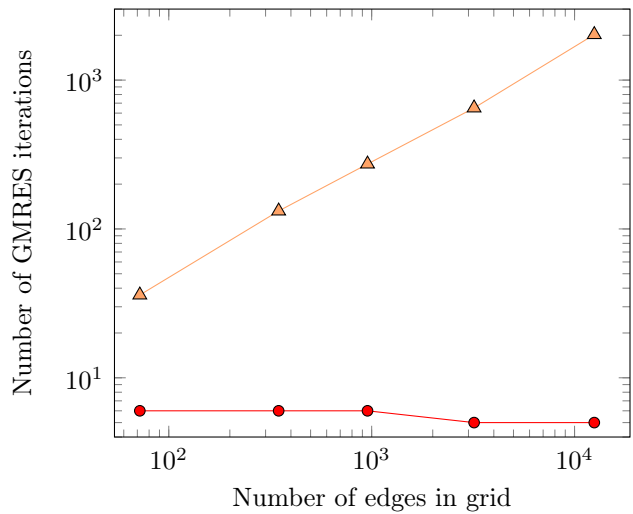
\begin{figure}
  \centering
\begin{tikzpicture}

\begin{axis}[
xlabel={Number of edges in grid},
ylabel={Number of GMRES iterations},
xmin=54, xmax=18765,
ymin=4, ymax=3000,
ymode=log,
xmode=log,
axis on top
]
\efieplot
table {%
72 36
348 132
954 273
3198 650
12510 2022
};
\calefieplot
table {%
72 6
348 6
954 6
3198 5
12510 5
};
\end{axis}

\end{tikzpicture}
  \caption{The number of GMRES iterations taken to solve the EFIE (\efiedesc) and preconditioned EFIE (\calefiedesc) to a tolerance of $1\times10^{-5}$ for scattering from the unit sphere.}
  \label{fig:efie_calderon_iteration_count}
\end{figure}

Taking $\Omega$ as unit sphere,
the incident wave $\displaystyle\bE\inc=\left[e^{\ii kz},0,0\right]$, and $k=2$, we discretise the EFIE on a
series of triangular grids with different levels of refinement.
The number of GMRES iterations required to solve the linear system arising from EFIE 
and the Calder\'on preconditioned EFIE applied to this example problem are shown in Figure \ref{fig:efie_calderon_iteration_count}.
It can be seen that while the number of iterations required to solve the EFIE rises quickly as the grid is refined, the number of
iterations required to solve the preconditioned EFIE remains below 10.

\begin{figure}
  \centering
  \includegraphics[width=.49\textwidth]{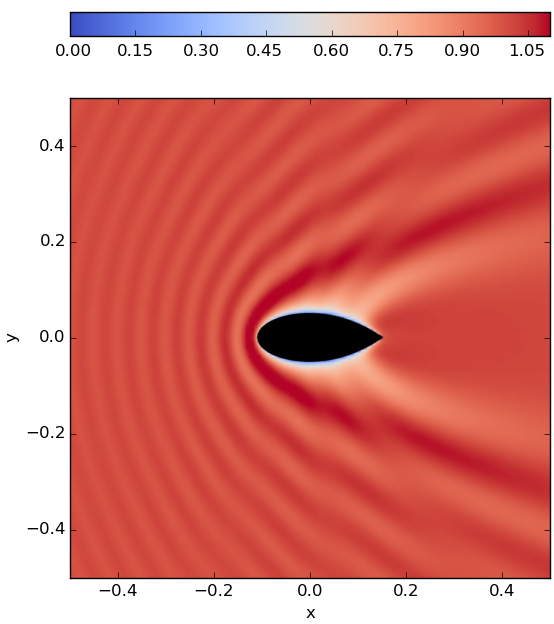}
\begin{tikzpicture}

\begin{axis}[small,
xlabel={Number of iterations},
ylabel={GMRES residual},
xmin=0, xmax=100,
ymin=1e-14, ymax=0.5,
ymode=log,
axis on top
]
\calefieplot[1]
table {%
1 0.40407534640473375
2 0.13315995682543402
3 0.06016670930011505
4 0.04091557042567667
5 0.030239701060197445
6 0.024651071571276367
7 0.02070480233386302
8 0.015327843867037361
9 0.013294468791739622
10 0.01073856738610812
11 0.005835163727455261
12 0.002753037098407198
13 0.0013445526407057842
14 0.000644014921498786
15 0.00027230214301466216
16 0.00014954995320887594
17 8.2461160303912e-05
18 3.4825452636017744e-05
19 2.186613031057502e-05
20 1.2873673242250206e-05
21 8.679576287184778e-06
22 6.732006625036935e-06
23 5.03144168761512e-06
24 3.743051469737036e-06
25 2.9261901620759574e-06
26 2.1728861301831174e-06
27 1.8040017190946769e-06
28 1.5481668356512497e-06
29 1.3588339069555332e-06
30 1.1557776766662453e-06
31 8.983953626629441e-07
32 6.697790076663105e-07
33 4.052785395976176e-07
34 2.3057931284019592e-07
35 1.479668872553349e-07
36 9.845483622200893e-08
37 7.640860723635373e-08
38 5.986931611040856e-08
39 4.5548853517227854e-08
40 3.539106912788732e-08
41 3.2094648806576474e-08
42 2.885857709604959e-08
43 2.606967337693494e-08
44 2.32649051115391e-08
45 2.0599072460164444e-08
46 1.771709442577409e-08
47 1.5070784868694207e-08
48 1.2812474894846971e-08
49 1.1175234454220843e-08
50 9.811120242749944e-09
51 8.153107479419397e-09
52 6.8977430142019424e-09
53 5.3840340194052276e-09
54 3.5939313747211564e-09
55 2.189574669770281e-09
56 1.3132772177627565e-09
57 8.364357392156851e-10
58 6.040171009939215e-10
59 4.77858973517966e-10
60 3.9481877956270154e-10
61 3.499987381345721e-10
62 3.131984341997588e-10
63 2.5006421878757754e-10
64 1.839912735605551e-10
65 1.1409355614787115e-10
66 8.722672053332117e-11
67 6.934115832823475e-11
68 5.759728730580402e-11
69 4.578652640516436e-11
70 3.3981402403483545e-11
71 2.5412822894555624e-11
72 1.8661246598216527e-11
73 1.4133417484017396e-11
74 1.1988252846865798e-11
75 9.536309152945562e-12
76 8.425650206472077e-12
77 7.27740080506799e-12
78 6.361664886390831e-12
79 5.481797427170449e-12
80 4.328680952228398e-12
81 4.02282320681923e-12
82 3.4207723812093338e-12
83 2.9719125579579313e-12
84 2.6231356346122204e-12
85 2.28538733338193e-12
86 2.0246562505309625e-12
87 1.7212661608906702e-12
88 1.429000093259403e-12
89 1.1591171409722968e-12
90 9.263976143657398e-13
91 7.399026583213624e-13
92 5.81731089334694e-13
93 3.994569447752317e-13
94 2.7659695463704624e-13
95 1.9868916328155379e-13
96 1.3963405179350144e-13
97 1.0858906375487353e-13
98 8.910276430646953e-14
99 7.371423133310741e-14
100 6.97089762701337e-14
};
\end{axis}

\end{tikzpicture}
  \caption{Slice at $z=0$ of squared electric field density of the wave $\bE\inc=[0,0,e^{\ii kx}]$, with $k=20\pi$, scattering off the NASA
almond, computed using the indirect preconditioned EFIE discretised on a grid with 2442 edges; and the corresponding convergence of the GMRES residual.}
  \label{fig:efie_almond_plots}
\end{figure}

\begin{figure}
  \centering
  \includegraphics[width=.49\textwidth]{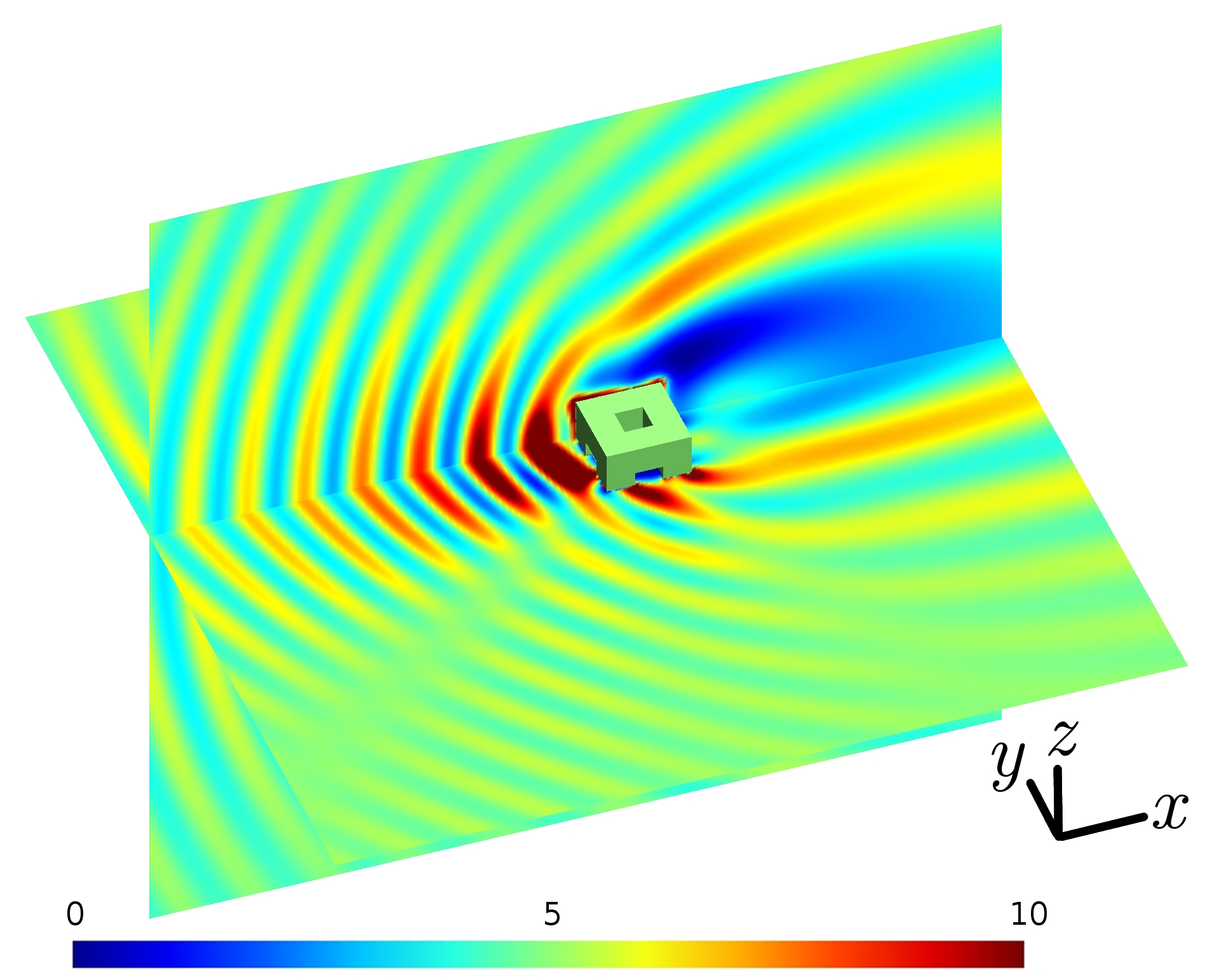}
\begin{tikzpicture}

\begin{axis}[small,
xlabel={Number of iterations},
ylabel={GMRES residual},
xmin=0, xmax=60,
ymin=1e-14, ymax=0.5,
ymode=log,
axis on top
]
\calefieplot[1]
table {%
1 0.6005434517950654
2 0.5008792241817531
3 0.25627547289896035
4 0.10964585977258218
5 0.06018602994899594
6 0.030009625629529735
7 0.026738277522934974
8 0.021441130348257537
9 0.010063119155161483
10 0.003641212629393378
11 0.001229209866210397
12 0.00042162916871510273
13 0.00035347781355950285
14 0.00025911168729377215
15 0.00011762512563568108
16 4.095887691452093e-05
17 1.5380054002761303e-05
18 1.3519109638397611e-05
19 9.63350196004712e-06
20 6.22000184251305e-06
21 4.57945464200252e-06
22 3.4304342126395165e-06
23 2.7698703458161614e-06
24 1.6265918167738813e-06
25 7.18944767907409e-07
26 3.2385112417929944e-07
27 2.9645744682868e-07
28 1.6535852814598367e-07
29 6.966768548491224e-08
30 2.0662026689467678e-08
31 7.077652141606951e-09
32 3.3721827461260604e-09
33 2.7735785924276746e-09
34 1.6683176361958699e-09
35 9.034691921933553e-10
36 3.8424123459748676e-10
37 1.8738818328676243e-10
38 1.455929939620588e-10
39 1.130138608700053e-10
40 5.975183555119924e-11
41 4.600802438117222e-11
42 3.512355948715719e-11
43 2.1617161042237566e-11
44 1.244124354171174e-11
45 6.700086470593679e-12
46 2.943105692607921e-12
47 2.6591251963358866e-12
48 2.0273079541896e-12
49 1.0340189979254972e-12
50 3.9199392417198925e-13
51 1.1109060831363764e-13
52 4.446406386739378e-14
53 4.11825876124667e-14
54 2.349089491069821e-14
55 1.345512151903681e-14
56 4.262061593101495e-15
57 2.3651426203819956e-15
58 1.9873497224109714e-15
59 1.1828885367734603e-15
60 5.496481774328152e-16
};
\end{axis}

\end{tikzpicture}
  \caption{Slices at $z=0.5$ and $y=0.5$ of squared electric field density of the wave $\bE\inc=\B{p}e^{\ii k\B{d}\cdot\B{x}}$, with $\B{p}=[-1,2,0]$,
$\B{d}=[\tfrac2{\sqrt5},\tfrac1{\sqrt5},0]$, and $k=5$ scattering off a level 1 Menger sponge; and the corresponding convergence of the GMRES residual.
This was computed using the indirect preconditioned EFIE discretised on a grid with 4680 edges.}
  \label{fig:efie_sponge_plots}
\end{figure}

Figures \ref{fig:efie_almond_plots} and \ref{fig:efie_sponge_plots} show electromagnetic waves scattering off more interesting obstacles.
Both were computed using the indirect Calder\'on preconditioned EFIE.

The left plot of Figure \ref{fig:efie_almond_plots} shows the wave $\bE\inc=[0,0,e^{\ii kx}]$, with $k=20\pi$, scattering off 
the NASA almond benchmarking shape, as defined in \cite{almond}.
This was discretised on a grid with 2442 edges. The right plot shows the corresponding convergence curve of the GMRES residual for the preconditioned EFIE.

Figure \ref{fig:efie_sponge_plots} shows the wave $\bE\inc=\B{p}e^{\ii k\B{d}\cdot\B{x}}$, with $\B{p}=[-1,2,0]$,
$\B{d}=[\tfrac2{\sqrt5},\tfrac1{\sqrt5},0]$, and $k=5$, scattering off a level 1 Menger sponge, and the corresponding convergence curve.
The Menger sponge was discretised using a grid with 4680 edges.

\section{Magnetic field integral equation (MFIE)}
\label{sec:mfie}

The MFIE can be represented in BEM++ as easily as the EFIE. The strong form of the indirect MFIE is (see \cite{BuffaHiptmair})
$$
(\bopH-\tfrac12\bopI)\B\xi = \gt\exterior\bE\inc.
$$
It is a valid formulation on closed domains. Its advantage compared to the EFIE is that on smooth domains, it is a compact perturbation of 
the identity operator and therefore well suited to iterative solvers. However, the robust implementation of the MFIE also on non-smooth 
domains requires the use of RWG trial spaces and RBC test spaces (see \cite{AndriulliMFIE}).
\begin{figure}
\centering
\begin{lstlisting}[language=Python]
from bempp.api.operators.potential.maxwell import \
	magnetic_field

calderon = ...
tangential_trace = ...
rwg_space = calderon.domain_spaces[0]
rbc_space = calderon.dual_to_range_spaces[0]

rhs = bempp.api.GridFunction(
    rwg_space, fun=tangential_trace,
    dual_space=rbc_space)
op = -calderon[0, 0]
sol, info = bempp.api.linalg.gmres(
    op, rhs, use_strong_form=True)

eval_points = ...
mfie_pot = magnetic_field(sol.space, eval_points, k)
field = -mfie_pot * sol    
\end{lstlisting}
\caption{Code snippet for the implementation of the MFIE in BEM++.}
\label{fig:mfie_code}
\end{figure}
The
MFIE can be easily implemented as shown in the code snippet in Figure \ref{fig:mfie_code}.
\begin{figure}
  \centering
\begin{tikzpicture}

\begin{axis}[small,
xlabel={Number of iterations},
ylabel={GMRES residual},
xmin=0, xmax=100,
ymin=1e-12, ymax=1,
ymode=log,
axis on top
]
\mfieplot[1]
table {%
1 0.09739653196936025
2 0.03246810831347021
3 0.009786738033018447
4 0.0030839625152994004
5 0.0011366438013882079
6 0.0003815631940876693
7 0.00015244836663398255
8 8.353385883598998e-05
9 6.524189361826505e-05
10 4.5902360238762704e-05
11 3.184764532861063e-05
12 2.292730739940691e-05
13 1.6910506039125556e-05
14 1.5133049931808173e-05
15 1.3292261975638115e-05
16 1.1562460695959248e-05
17 1.0902788943141362e-05
18 1.0257234608799893e-05
19 9.550625864945403e-06
20 9.257195090041696e-06
21 9.040168938692394e-06
22 8.509567106226216e-06
23 7.909221802197972e-06
24 7.346161103865587e-06
25 6.725666103373564e-06
26 6.193997238850087e-06
27 5.5807154138183606e-06
28 5.1333930677416775e-06
29 4.615276478812022e-06
30 4.197080996831331e-06
31 3.7613263916912378e-06
32 3.3356534050698814e-06
33 2.8903885027561653e-06
34 2.5634680674906e-06
35 2.3395055591208718e-06
36 2.0831443654657926e-06
37 1.835722019703345e-06
38 1.5977118491692398e-06
39 1.3458334920418708e-06
40 1.148483974922951e-06
41 9.710478568852977e-07
42 8.39397605066879e-07
43 7.636117416258557e-07
44 6.86007102535829e-07
45 6.224855298123087e-07
46 5.736935450304823e-07
47 5.413675714971276e-07
48 4.953993829127822e-07
49 4.557295055933419e-07
50 4.058256648888562e-07
51 3.6213085901987957e-07
52 3.313115352583064e-07
53 2.8965602064038484e-07
54 2.5275046035259976e-07
55 2.2444271487627977e-07
56 2.0026557606742732e-07
57 1.8344329971256637e-07
58 1.680299372905009e-07
59 1.598094132987287e-07
60 1.5420397151435922e-07
61 1.5071107001442065e-07
62 1.4690285983792779e-07
63 1.4090218770397926e-07
64 1.355307850483773e-07
65 1.2764782793591835e-07
66 1.188008741713906e-07
67 1.0724520999748321e-07
68 9.450666468311103e-08
69 8.494296138074893e-08
70 7.588673567433487e-08
71 7.078956874863148e-08
72 6.696491881331344e-08
73 6.165940604753656e-08
74 5.8157101204466446e-08
75 5.4566050438164615e-08
76 5.062981855821108e-08
77 4.591028122699917e-08
78 4.141008509415699e-08
79 3.7355097378833335e-08
80 3.396533727360597e-08
81 3.1184710014922826e-08
82 2.9118816431371965e-08
83 2.7150552601676813e-08
84 2.5503195882374288e-08
85 2.1713068298981616e-08
86 1.8492266632716734e-08
87 1.6727152773797976e-08
88 1.5162387168958242e-08
89 1.4036072858976136e-08
90 1.3273367271203362e-08
91 1.253300230630197e-08
92 1.1628530813635158e-08
93 1.0937010888598676e-08
94 1.0407165774377192e-08
95 9.716563946500327e-09
96 9.203478485143208e-09
97 8.73202560191937e-09
98 8.319948977269482e-09
99 7.709554690253309e-09
100 6.961283052684614e-09
101 6.220732999964805e-09
102 5.674875245381469e-09
103 5.289932836883209e-09
104 4.8955813234703e-09
105 4.606353765166653e-09
106 4.234201860477365e-09
107 3.907999280727865e-09
108 3.528914461560126e-09
109 3.2197137595326165e-09
110 2.976322889020191e-09
111 2.670622516970971e-09
112 2.4380883267309855e-09
113 2.1552205861445946e-09
114 1.8627942269114213e-09
115 1.7042277046445384e-09
116 1.5312300921269263e-09
117 1.3102483435949176e-09
118 1.0541979233477689e-09
119 8.643714334110835e-10
120 7.002021751083676e-10
121 6.137236560026666e-10
122 5.3906102871202e-10
123 4.594487578650781e-10
124 4.138119860500616e-10
125 3.639475475706182e-10
126 3.194464234020067e-10
127 2.934506537175618e-10
128 2.685038397261607e-10
129 2.465467139629879e-10
130 2.2987458140939485e-10
131 2.0959305607719208e-10
132 1.9142721187706744e-10
133 1.6915394965123868e-10
134 1.5345877800850188e-10
135 1.365990658439059e-10
136 1.2088648990010133e-10
137 1.0402325190658222e-10
138 9.022502232149417e-11
139 7.737637785742396e-11
140 6.334798804218193e-11
141 5.4061520808261774e-11
142 4.7757044895806946e-11
143 4.241018588593862e-11
144 3.8787600819244726e-11
145 3.501116490101617e-11
146 3.2019067864757735e-11
147 2.9846981141871654e-11
148 2.7142294333565754e-11
149 2.5194245117279852e-11
150 2.2747603309753056e-11
151 2.0561308957762252e-11
152 1.8932696103852327e-11
153 1.6680836218496497e-11
154 1.508442108048266e-11
155 1.4336240942713433e-11
156 1.3389077894344404e-11
157 1.2597682860461825e-11
158 1.190382826269065e-11
159 1.0959384467353563e-11
160 9.998285103198434e-12
161 9.255739955926586e-12
162 8.758666122923383e-12
163 8.404865701662256e-12
164 8.005710837330747e-12
165 7.43217289556426e-12
166 7.094852229624942e-12
167 6.4734502290367635e-12
168 5.99058344541017e-12
169 5.517747653086152e-12
170 4.990515181124242e-12
171 4.585769620983278e-12
172 4.198746819249849e-12
173 3.835550175739793e-12
174 3.57542092947208e-12
175 3.314051748475639e-12
176 3.0994475157764493e-12
177 2.8236883817488644e-12
178 2.604386250757484e-12
179 2.323165166259958e-12
180 2.013719465067691e-12
181 1.8007829447209784e-12
182 1.607433314279581e-12
183 1.407459211605283e-12
184 1.284175207956195e-12
185 1.1658005291607934e-12
186 1.0555057518566998e-12
187 9.809214256657037e-13
188 8.941791484407344e-13
189 8.253625027524133e-13
190 7.688109884090803e-13
191 7.140219215918101e-13
192 6.661536851234008e-13
193 5.989067927510902e-13
194 5.460056491056177e-13
195 4.944257143803116e-13
196 4.347272323989612e-13
197 3.8184804938289935e-13
198 3.3473735288399336e-13
199 2.9129340611503285e-13
200 2.4715773778065973e-13
201 2.1541927301522658e-13
202 1.8845413830763308e-13
203 1.6757264807356638e-13
204 1.4859796688661457e-13
205 1.308713825381872e-13
206 1.187394761238723e-13
207 1.0830547353225468e-13
208 9.817755043085349e-14
209 9.153296853382121e-14
210 8.521872133937572e-14
211 8.010953418814656e-14
212 7.732557449122808e-14
213 7.266539672955066e-14
214 6.933898122724827e-14
215 6.385484563330094e-14
216 5.824359127005478e-14
217 5.396238324989138e-14
218 4.950214492753883e-14
219 4.637397833506537e-14
220 4.2723647338800063e-14
221 3.98042549340172e-14
222 3.807327765211632e-14
223 3.590000208647268e-14
224 3.39300563663881e-14
225 3.146997307006937e-14
226 2.90462397798808e-14
227 2.7402720067307343e-14
228 2.5134409734862134e-14
229 2.356563403832696e-14
230 2.1896647951523378e-14
231 2.0024073039295508e-14
232 1.869543472075138e-14
233 1.7351016480945718e-14
234 1.588713216480083e-14
235 1.5033717104577687e-14
236 1.37006037525059e-14
237 1.2994687938832992e-14
238 1.2095028895903607e-14
239 1.1227517895711519e-14
240 9.81580959004646e-15
241 8.562245185143763e-15
242 7.901528813234886e-15
243 7.238039383387934e-15
244 6.613919965708521e-15
245 6.236931174805335e-15
246 5.849421472265195e-15
247 5.478177588172701e-15
248 5.191983637286101e-15
249 4.859732560344419e-15
250 4.385504398130441e-15
251 4.032754738349397e-15
252 3.628173067212265e-15
253 3.197957433511041e-15
254 2.9107812598409874e-15
255 2.5828535912444035e-15
256 2.269661927307058e-15
257 1.9707423217501338e-15
258 1.6423442342016485e-15
259 1.2930996214266393e-15
260 1.4034923464165288e-15
261 9.850398701588959e-16
};
\end{axis}

\end{tikzpicture}
\begin{tikzpicture}

\begin{axis}[small,
xlabel={Number of iterations},
xmin=0, xmax=100,
ymin=1e-12, ymax=1,
ymode=log,
axis on top
]
\mfieplot[1]
table {%
1 0.8725472331969267
2 0.6429582756379736
3 0.5115398326529771
4 0.35836849568920215
5 0.25788307603690286
6 0.1708115461130254
7 0.103688820148429
8 0.06409293783201524
9 0.0397896858394414
10 0.030518946204696557
11 0.029547801627909574
12 0.02233555279702101
13 0.013922135214589022
14 0.01035221720970957
15 0.008919249916670247
16 0.00680948334607921
17 0.004449493103980392
18 0.0028836174054650185
19 0.0017899755888339222
20 0.0010190513392753517
21 0.0007619371530433409
22 0.0006995126870973485
23 0.0005823882852829413
24 0.0005191202556539587
25 0.0004831545725333321
26 0.00037334994012913267
27 0.0002773360346873486
28 0.0001999367089490265
29 0.00014515029294091526
30 0.0001009078580268158
31 6.406643345117206e-05
32 3.9307105779649215e-05
33 2.3957689656615274e-05
34 1.5626900651018103e-05
35 1.3474672519555846e-05
36 1.2651290008794378e-05
37 1.0339275682943102e-05
38 8.13502569522773e-06
39 6.894507829185476e-06
40 5.02773097504499e-06
41 3.865775689008626e-06
42 3.2544799447070472e-06
43 2.6871717543391826e-06
44 2.159517129780863e-06
45 1.9789281963925502e-06
46 1.8283570170016695e-06
47 1.2962985290779144e-06
48 8.603436722007412e-07
49 5.496626371863151e-07
50 3.3274265018257703e-07
51 2.1718359671883225e-07
52 1.4610816759247498e-07
53 1.0669346460138552e-07
54 8.116496755354499e-08
55 6.135344205017637e-08
56 5.4698858988487383e-08
57 4.9109663977497564e-08
58 4.276298766710674e-08
59 3.7421817120034164e-08
60 2.5040898486382613e-08
61 1.917076722038099e-08
62 1.715711708500136e-08
63 1.5003084562059637e-08
64 1.3542800110535057e-08
65 1.2541175615515089e-08
66 9.768279994125722e-09
67 7.039637678725661e-09
68 5.36393325862913e-09
69 3.81552452703874e-09
70 2.606297018442209e-09
71 1.7632572631878174e-09
72 1.1624984184173707e-09
73 7.412766222450079e-10
74 4.657166488406153e-10
75 3.980896535603539e-10
76 3.6783155411185965e-10
77 3.006448371009232e-10
78 2.3139578062832752e-10
79 1.942422606409575e-10
80 1.4435808719153855e-10
81 1.0830758247405199e-10
82 8.94659894733267e-11
83 7.31588191870836e-11
84 6.121919634365556e-11
85 5.6345040083681065e-11
86 5.012899063508748e-11
87 3.319141989480455e-11
88 2.067425130976743e-11
89 1.3426000315974425e-11
90 8.559368411500505e-12
91 5.7593512817299455e-12
92 3.957255173282492e-12
93 2.904516757073118e-12
94 2.131154846128395e-12
95 1.5583449058289828e-12
96 1.4109491340926117e-12
97 1.2995322764561258e-12
98 1.1587435081979005e-12
99 1.0251377985511743e-12
100 7.337237709854734e-13
101 5.786369138479023e-13
102 5.196321377784388e-13
103 4.559908552562459e-13
104 4.1173113636179726e-13
105 3.7726006298098724e-13
106 2.9173115325411254e-13
107 2.02683635817119e-13
108 1.5435003054487572e-13
109 1.1131059577014781e-13
110 7.639206861402075e-14
111 5.1999786670612315e-14
112 3.36125987314732e-14
113 2.163568035526185e-14
114 1.3454735311778124e-14
115 1.1918099419316101e-14
116 1.0437423289881187e-14
117 8.61622685489726e-15
118 7.19693020330758e-15
119 5.875937774863151e-15
120 4.2351111614128626e-15
121 2.999774344409779e-15
122 2.4639538788112906e-15
123 2.2251125945438596e-15
124 2.006457981428879e-15
125 1.824517574777971e-15
126 1.4990293604470443e-15
127 9.5009042370323e-16
};
\end{axis}

\end{tikzpicture}
  \caption{GMRES residuals vs iterations for the MFIE for the NASA almond (left) and Menger sponge (right) examples.}
  \label{fig:mfie_convergences}
\end{figure}
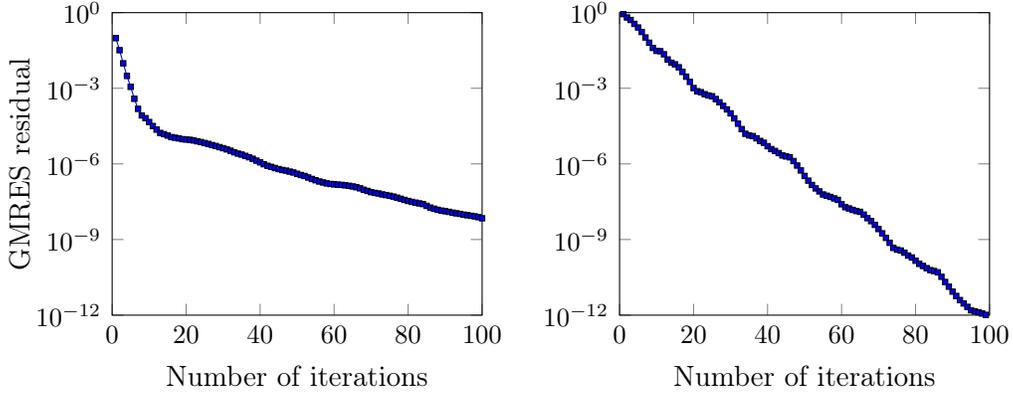

In Figure \ref{fig:mfie_convergences}, we demonstrate the rate of convergence of the MFIE for solving the NASA almond and the Menger sponge examples 
from Section \ref{sec:efie}. Both converge nicely without additional preconditioning. While the rate of convergence is slower than in the 
preconditioned EFIE case, only one boundary operator needs to be applied for each iteration step, while the preconditioned EFIE requires 
two applications.

\section{Combined field integral equation (CFIE)}
\label{sec:cfie}
While the EFIE and MFIE are efficient for low-frequency Maxwell problems, they lead to break-down close to interior resonances. The CFIE is 
immune to breakdown at resonances and is therefore particularly suitable for high-frequency scattering problems. Here, we focus on the 
direct CFIE and the stable version of it derived in \cite{rokhlin_cfie}. In its strong form, it is given as
\begin{equation}
\label{eq:cfie}
\left(-\bopR\bopE+\tfrac12\bopI+\bopH\right)\B\pi
    = -\bopR(\tfrac12\bopI+\bopH)\gt\exterior\bE\inc-\bopE\gt\exterior\bE\inc.
\end{equation}
The CFIE is a linear combination of the direct MFIE, obtained from the second row of the exterior Calder\'on projector, and a regularised 
direct EFIE, obtained by multiplying the first row of the exterior Calder\'on projector with a regularisation operator $\bopR$. Frequently, the 
EFIE component is multiplied with a complex scalar. This is not necessary here, as in our implementation the electric field operator itself 
is already scaled with $\ii$.

\begin{figure}
\centering
\begin{lstlisting}[language=Python]
multitrace = ...
multitrace_scaled = ...
identity = ...
tangential_trace = ...

rwg_space = multitrace.domain_spaces[0]
snc_space = multitrace.dual_to_range_spaces[1]
bc_space = multitrace.domain_spaces[1]
rbc_space = multitrace.dual_to_range_spaces[0]

calderon = 0.5 * identity + multitrace
grid_fun = bempp.api.GridFunction(
    bc_space, fun=tangential_trace,
    dual_space=snc_space)

R = multitrace_scaled[0, 1]
E1 = multitrace[0, 1]
E2 = -multitrace[1, 0]
mfie1 = calderon[0, 0]
mfie2 = calderon[1, 1]

rhs = -R * mfie2 * grid_fun - E1 * grid_fun
op = -R * E2 + mfie1
sol, info = bempp.api.linalg.gmres(
    op, rhs, use_strong_form=True)
\end{lstlisting}
\caption{Code snippet for the implementation of the CFIE in BEM++.}
\label{fig:cfie_code}
\end{figure}

We use the RWG space for the unknown Neumann trace $\B\pi$. Hence, we swap $E_1$ and $E_2$, and $H_1$ and $H_2$ in the discretisation of the Calder\'on projector in Section \ref{sec:calderon}.

It follows that we discretise $\bopH$ on the left-hand side of \eqref{eq:cfie} with $H_1$.
Moreover, for $\bopE$ on the left-hand side we choose the discrete operator $E_2$. The operator $E_2$ maps from RWG into BC, while $H_1$ maps from RWG into RWG.
We therefore require that a discretisation of $\bopR$ maps from the BC space to the RWG space. We could for example choose the operator $E_1$. But this operator is not injective at interior electric eigenvalues. The solution is to choose $E_1$ based on the wavenumber $\ii k$, instead of $k$ (see \cite{rokhlin_cfie}). On the right-hand side we choose for $\bopH$ the discretisation $H_2$ and for $\bopE$ the discretisation $E_1$ to stay compatible with the corresponding direct EFIE and direct MFIE formulation.
We can easily implement this in the framework of BEM++ with the code snippet in Figure \ref{fig:cfie_code}.

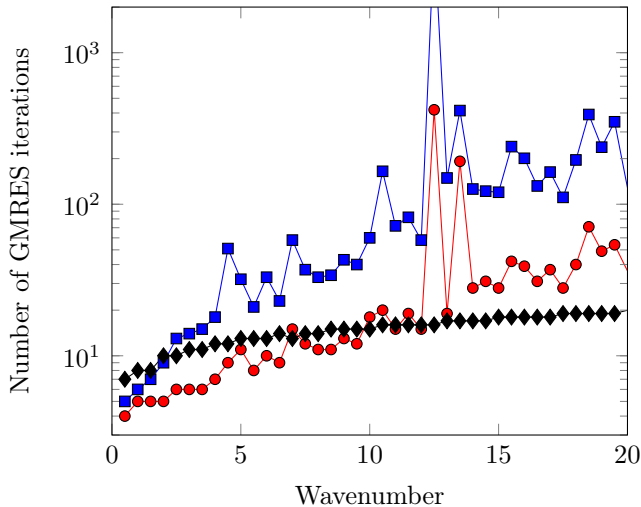
\begin{figure}
  \centering
\begin{tikzpicture}

\begin{axis}[
xlabel={Wavenumber},
ylabel={Number of GMRES iterations},
xmin=0, xmax=20,
ymin=3, ymax=2000,
ymode=log,
axis on top
]
\calefieplot
table {%
0.5 4
1.0 5
1.5 5
2.0 5
2.5 6
3.0 6
3.5 6
4.0 7
4.5 9
5.0 11
5.5 8
6.0 10
6.5 9
7.0 15
7.5 12
8.0 11
8.5 11
9.0 13
9.5 12
10.0 18
10.5 20
11.0 15
11.5 19
12.0 15
12.5 420
13.0 19
13.5 192
14.0 28
14.5 31
15.0 28
15.5 42
16.0 39
16.5 31
17.0 37
17.5 28
18.0 40
18.5 71
19.0 49
19.5 54
20.01 36
};
\mfieplot
table {%
0.5 5
1.0 6
1.5 7
2.0 9
2.5 13
3.0 14
3.5 15
4.0 18
4.5 51
5.0 32
5.5 21
6.0 33
6.5 23
7.0 58
7.5 37
8.0 33
8.5 34
9.0 43
9.5 40
10.0 60
10.5 165
11.0 72
11.5 82
12.0 58
12.5 6000
13.0 149
13.5 415
14.0 126
14.5 122
15.0 120
15.5 240
16.0 201
16.5 132
17.0 163
17.5 111
18.0 196
18.5 391
19.0 238
19.5 350
20.01 126
};
\cfieplot
table {%
0.5 7
1.0 8
1.5 8
2.0 10
2.5 10
3.0 11
3.5 11
4.0 12
4.5 12
5.0 13
5.5 13
6.0 13
6.5 14
7.0 13
7.5 14
8.0 14
8.5 15
9.0 15
9.5 15
10.0 15
10.5 16
11.0 16
11.5 16
12.0 16
12.5 16
13.0 17
13.5 17
14.0 17
14.5 17
15.0 18
15.5 18
16.0 18
16.5 18
17.0 18
17.5 19
18.0 19
18.5 19
19.0 19
19.5 19
20.01 20
};
\end{axis}

\end{tikzpicture}
  \caption{The number of GMRES iterations needed to solve the preconditioned EFIE (\calefiedesc), MFIE (\mfiedesc) and CFIE (\cfiedesc),
for the problem on the unit sphere with 4809 grid edges
as the wavenumber increases.
Close to $k=12.5$ there is an interior resonance, which causes the number of iterations for the EFIE and MFIE to explode, while the CFIE's iteration count 
stay small.}
  \label{fig:emcfie_k_vs_iterations}
\end{figure}

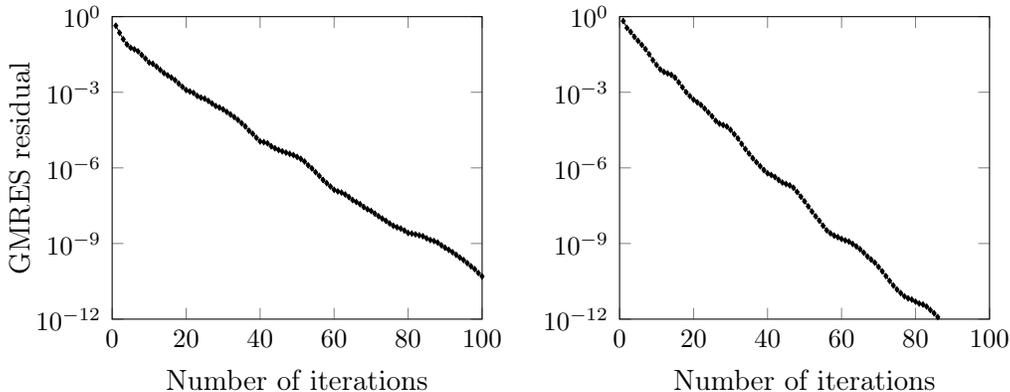
\begin{figure}
  \centering
\begin{tikzpicture}

\begin{axis}[small,
xlabel={Number of iterations},
ylabel={GMRES residual},
xmin=0, xmax=100,
ymin=1e-12, ymax=1,
ymode=log,
axis on top
]
\cfieplot[1]
table {%
1 0.4373063898947885
2 0.22817688488871662
3 0.12713784250870616
4 0.07915645530413551
5 0.05852135605931717
6 0.05065132234945462
7 0.04205042013097717
8 0.03017640639243531
9 0.021852543438948407
10 0.015162378504970559
11 0.013440151496548059
12 0.010427095038309433
13 0.007712243160785964
14 0.005789611228656972
15 0.0046888200536886315
16 0.003846218361350648
17 0.0030365343541674645
18 0.0021994966279654405
19 0.0016389727443546834
20 0.0012249163197671515
21 0.0010711561593516386
22 0.0009453334497711033
23 0.0007185719509574717
24 0.0006151225266529557
25 0.0005573534053956514
26 0.00045439782572086234
27 0.000360211905419642
28 0.00028479360735915985
29 0.0002471991444224034
30 0.0002102131178877988
31 0.00016597336161367946
32 0.00013122680518906816
33 0.00010378835973160902
34 8.021390035588735e-05
35 5.941061145474548e-05
36 4.420857162770492e-05
37 2.9927885135741852e-05
38 2.2617646617447514e-05
39 1.5678979520789337e-05
40 1.1221629083676806e-05
41 1.0522006443634307e-05
42 9.422818331206194e-06
43 7.237650648430926e-06
44 6.0216055337260255e-06
45 5.137723603049349e-06
46 4.521079788895738e-06
47 4.043479016761527e-06
48 3.608554841315347e-06
49 3.226214243991825e-06
50 2.725448498193884e-06
51 2.2789640323615222e-06
52 1.7933176339859556e-06
53 1.264199130580865e-06
54 9.55571745620248e-07
55 6.738763996595237e-07
56 4.810955328167548e-07
57 3.329384964051562e-07
58 2.492624395844363e-07
59 1.830249640594399e-07
60 1.3554909199591164e-07
61 1.1721515249580469e-07
62 1.0528039198173948e-07
63 8.810827776380833e-08
64 6.961721205680363e-08
65 5.293404779263348e-08
66 4.442477629656594e-08
67 3.647137674483354e-08
68 2.8174567949408635e-08
69 2.2908347096190068e-08
70 1.9588139785173388e-08
71 1.5360336608959664e-08
72 1.215059063072884e-08
73 9.804975680736766e-09
74 7.931984025310686e-09
75 6.259878309109986e-09
76 5.076574779881247e-09
77 4.4077355670760185e-09
78 3.940797492235414e-09
79 3.2877755135431335e-09
80 2.6255065133154436e-09
81 2.4878836369392526e-09
82 2.3121893071429596e-09
83 2.1184504675023303e-09
84 1.9477661212944846e-09
85 1.6088021797087694e-09
86 1.3783664960693275e-09
87 1.2589436766997765e-09
88 1.0895801138099029e-09
89 8.501796320230851e-10
90 6.821412185766986e-10
91 5.600156211535302e-10
92 4.540065113297074e-10
93 3.568892778289517e-10
94 2.779179711905906e-10
95 2.2143570997207834e-10
96 1.6664943155853298e-10
97 1.254141345284774e-10
98 9.668798124726964e-11
99 6.769966359368797e-11
100 4.969292111169209e-11
101 4.1329106738579936e-11
102 3.336650519092071e-11
103 2.6207364650223043e-11
104 1.8976976346698754e-11
105 1.4776571347882485e-11
106 1.271254898936919e-11
107 1.0205083866793797e-11
108 8.05690995855913e-12
109 6.607021496710239e-12
110 5.4496408614962384e-12
111 4.481161999698242e-12
112 3.383122831974794e-12
113 2.6000761440209493e-12
114 2.1672285968757617e-12
115 1.8415581992575907e-12
116 1.4599688196745181e-12
117 1.0305502555348721e-12
118 8.139332682650212e-13
119 7.526692682032942e-13
120 6.922212452346141e-13
121 6.301214120370001e-13
122 5.911626027863359e-13
123 4.672752328187876e-13
124 3.558656138287782e-13
125 3.245102028237358e-13
126 2.948981986833479e-13
127 2.5637692319762697e-13
128 2.1934718125017845e-13
129 1.710854218971463e-13
130 1.3681137154769596e-13
131 1.1155383680258454e-13
132 8.756514182002392e-14
133 6.571644049030959e-14
134 4.954763196441862e-14
135 3.913800495087723e-14
136 2.919432155080702e-14
137 2.234049864659408e-14
138 1.7883437097081658e-14
139 1.4970278091256567e-14
140 2.7967121852933968e-14
141 1.2661868308826386e-14
142 9.643751672179492e-15
143 8.038972782937625e-15
144 6.4790652479479525e-15
145 5.375832423418673e-15
146 4.331553323762209e-15
147 3.554392906176554e-15
148 3.0223542784534894e-15
149 2.6398885181927992e-15
150 2.1707829788719687e-15
151 1.6732653137771223e-15
152 1.2301010225975498e-15
153 9.376871125586264e-16
};
\end{axis}

\end{tikzpicture}
\begin{tikzpicture}

\begin{axis}[small,
xlabel={Number of iterations},
xmin=0, xmax=100,
ymin=1e-12, ymax=1,
ymode=log,
axis on top
]
\cfieplot[1]
table {%
1 0.6698350394166369
2 0.35691115454687605
3 0.2472389372136695
4 0.159046299036079
5 0.10946141345383167
6 0.07581185265778397
7 0.051563501663534496
8 0.03232426864029623
9 0.018794155038219705
10 0.012079077637457065
11 0.008099901937797473
12 0.006402530255188558
13 0.0056218798233696994
14 0.004901149660187099
15 0.00380626710150008
16 0.002483134665323827
17 0.0015825641097590305
18 0.001013477487045964
19 0.000706389378703392
20 0.0005088418436835736
21 0.0003995314578524577
22 0.0003146147304983913
23 0.00022482472892311343
24 0.0001584560842357429
25 0.00011185149880859154
26 7.360188022762342e-05
27 5.696448951039983e-05
28 5.006728288811138e-05
29 4.3556453108764784e-05
30 3.222516486236894e-05
31 2.1750032841651538e-05
32 1.468594005779081e-05
33 8.93237358742378e-06
34 5.611638488545726e-06
35 3.6887442813112964e-06
36 2.4153463146236065e-06
37 1.6822768455665847e-06
38 1.1691384505616782e-06
39 8.220495311766149e-07
40 6.112297477943904e-07
41 5.241485846212279e-07
42 4.3328150304107466e-07
43 3.3192432043131037e-07
44 2.625726879459657e-07
45 2.2904517485279644e-07
46 2.0260814677347916e-07
47 1.631299066071266e-07
48 1.1063719393179846e-07
49 7.213126078379678e-08
50 4.7376496176704876e-08
51 2.972225189968058e-08
52 1.857057556538907e-08
53 1.204379582530627e-08
54 8.091824770836364e-09
55 5.102675431745561e-09
56 3.3337002172375682e-09
57 2.5463239988593163e-09
58 2.069090843872446e-09
59 1.7885894860556132e-09
60 1.5199196436988307e-09
61 1.3342104431643455e-09
62 1.195978706476111e-09
63 9.635704303754571e-10
64 7.688008020000382e-10
65 5.95306739413687e-10
66 4.3389825365755985e-10
67 3.052381311194289e-10
68 2.3084208611041323e-10
69 1.739956997387548e-10
70 1.1798150160408821e-10
71 7.905329278550263e-11
72 5.1099390193376364e-11
73 3.294973075738334e-11
74 2.16021255440034e-11
75 1.4943187802815294e-11
76 1.0843187872737663e-11
77 8.045415588521422e-12
78 6.679218435977133e-12
79 5.9751960531745624e-12
80 4.936935968255188e-12
81 4.162853946396805e-12
82 3.816333129750898e-12
83 3.15782514020056e-12
84 2.3140041444212157e-12
85 1.7196714333938915e-12
86 1.215329538421827e-12
87 8.111432961782957e-13
88 5.448309740374523e-13
89 3.8447127509681225e-13
90 2.746014967013933e-13
91 2.1386046339190588e-13
92 1.5429569247859707e-13
93 1.0463357441882438e-13
94 7.127882265049431e-14
95 4.9111431096556473e-14
96 3.4180267685105856e-14
97 2.8317366100896118e-14
98 2.4584937163139962e-14
99 2.1045315126998064e-14
100 1.6882004220132357e-14
101 1.3715513561442482e-14
102 1.174058296414655e-14
103 1.0296358438433133e-14
104 8.507961674080308e-15
105 6.422113812719211e-15
106 4.64593333710567e-15
107 3.2847858219513652e-15
108 2.2986884635621903e-15
109 1.6196667772031942e-15
110 1.2576190750173876e-15
111 9.237320513654326e-16
};
\end{axis}

\end{tikzpicture}
  \caption{GMRES residuals vs iterations for the CFIE for the NASA almond (left) and Menger sponge (right) examples.}
  \label{fig:cfie_convergences}
\end{figure}

In Figure \ref{fig:emcfie_k_vs_iterations}, we demonstrate our CFIE implementation for a simple scattering problem on 
the unit sphere for varying wavenumbers. While the CFIE shows a bounded and only slowly growing number of GMRES iterations, the number of 
iterations for the EFIE and MFIE grow strongly in the neighborhood of an interior resonance close to $k=12.5$. In 
Figure \ref{fig:cfie_convergences}, we show the convergence of GMRES for the CFIE applied to the NASA almond and 
Menger sponge examples from Sections \ref{sec:efie} and \ref{sec:mfie}. 

Another construction of the CFIE based on the use of BC spaces was presented in 
\cite{Bagci2009}. In particular, the treatment of the MFIE component in that paper differs from the proposed formulation in this section.

\section{Concluding remarks}
\label{sec:conclusions}

\begin{figure}
  \centering
  \includegraphics[width=.5\textwidth]{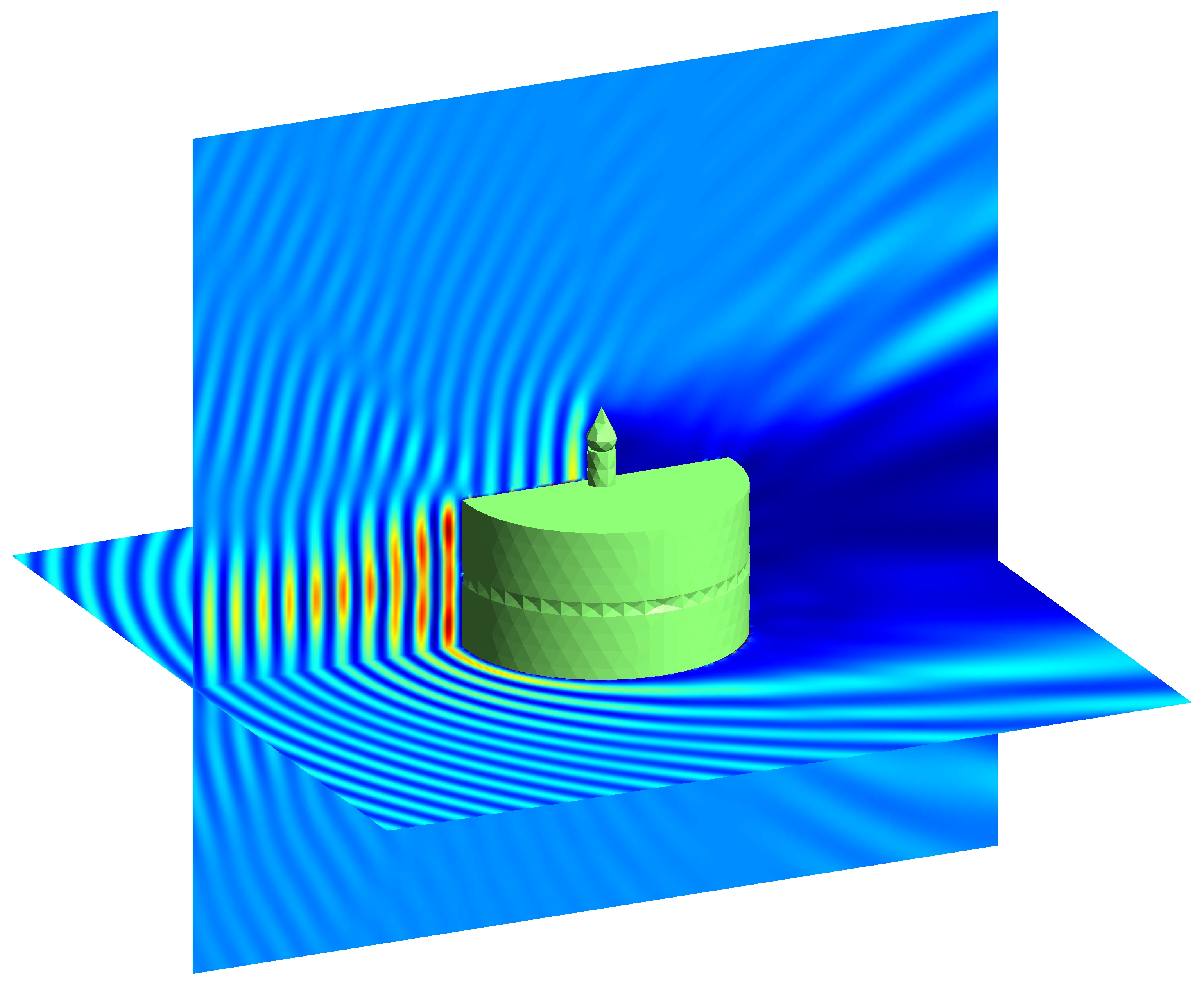}
  \caption{Slices of the squared electric field density of the wave $\bE\inc=[0,0,e^{\ii kx}]$, with $k=8$, scattering off a
birthday cake, computed using the preconditioned EFIE discretised on a grid with 3225 edges.}
  \label{fig:cake}
\end{figure}

We have only discussed the Maxwell solver aspects of BEM++. The goal is to make available advanced techniques for the solution of electromagnetic scattering problems while hiding complex details associated with robust boundary integral equation formulations for Maxwell. We have not discussed here the solution of Maxwell transmission problems. But the proposed framework fits this very well as a we need for each medium the corresponding multitrace operator $\bopA$. Work on single-trace and multi-trace transmission formulations is ongoing.

We have not included timing benchmarks in this paper since the focus is on the demonstration of the software framework and not the performance comparison of different solvers. BEM++ internally uses its own thread and MPI parallelised $\mathcal{H}$-matrix implementation for the discretisation of integral operators, which is very efficient for low-frequency problems, but also performs well in the medium frequency range.

Finally, we want to conclude this paper with an example adapted to the theme of this special issue. Figure \ref{fig:cake} shows the electromagnetic field around perfectly conducting birthday cake (not for eating!) computed with BEM++.

All examples in this paper were computed with version 3.1 of the BEM++ library, and will be made available in the form of IPython Notebooks on \url{www.bempp.org}.

\section*{Acknowledgements}
The authors would like to thank
Peter Monk for his continuing support of the BEM++ project. Indeed, he was the first external user of BEM++ when the project just started, and over the years he has constantly given helpful feedback and bug reports.

We would furthermore like to thank Dave Hewett (UCL) and Andrea Moiola (Reading) for helpful discussions regarding trace spaces of boundary integral operators for Maxwell.

The work of the second author was supported by Engineering and Physical Sciences Research Council grant EP/K03829X/1.

\bibliographystyle{elsarticle-num} 
\bibliography{references}

\end{document}